\theoremstyle{plain}
\newtheorem{theorem}{Theorem}[section]
\newtheorem{lemma}[theorem]{Lemma}
\newtheorem{proposition}[theorem]{Proposition}
\newtheorem{corollary}[theorem]{Corollary}
\theoremstyle{definition}
\newtheorem{definition}[theorem]{Definition}
\theoremstyle{remark}
\newtheorem{example}[theorem]{Example}
\newtheorem{remark}[theorem]{Remark}
\begin{document}

\thispagestyle{empty}
\pagenumbering{roman}
\begin{center}

{\Large
D-branes and $K$-homology
}

\vfill

Bei Jia

\vfill

Thesis submitted to the Faculty of the \\
Virginia Polytechnic Institute and State University \\
in partial fulfillment of the requirements for the degree of

\vfill

Master of Science \\
in \\
Mathematics

\vfill

Peter E. Haskell, Chair \\
William J. Floyd \\
Peter A. Linnell \\
Eric R. Sharpe

\vfill

April 19, 2013 \\
Blacksburg, Virginia

\vfill

Keywords:$K$-homology, $K$-theory, D-brane, String theory
\\
Copyright 2013, Bei Jia

\end{center}

\pagebreak

\thispagestyle{empty}
\begin{center}

{\large D-branes and $K$-homology}

\vfill

Bei Jia

\vfill

(ABSTRACT)

\vfill

\end{center}

In this thesis the close relationship between the topological $K$-homology group of the spacetime manifold $X$ of string theory and D-branes in string theory is examined. An element of the $K$-homology group is given by an equivalence class of $K$-cycles $[M,E,\phi]$, where $M$ is a closed spin$^c$ manifold, $E$ is a complex vector bundle over $M$, and $\phi: M\rightarrow X$ is a continuous map. It is proposed that a $K$-cycle $[M,E,\phi]$ represents a D-brane configuration wrapping the subspace $\phi(M)$. As a consequence, the $K$-homology element defined by $[M,E,\phi]$ represents a class of D-brane configurations that have the same physical charge. Furthermore, the $K$-cycle representation of D-branes resembles the modern way of characterizing fundamental strings, in which the strings are represented as two-dimensional surfaces with maps into the spacetime manifold. This classification of D-branes also suggests the possibility of physically interpreting D-branes wrapping singular subspaces of spacetime, enlarging the known types of singularities that string theory can cope with.

\vfill

\pagebreak

\chapter*{Acknowledgments}
I would like to thank my advisor, Professor Haskell, for all the help he gave me during my Master's study. I have learned a tremendous amount of mathematics from him. He introduced the topics of $K$-theory and $K$-homology to me, as well as many other relevant materials. He even provided many of the technical details that are used in this thesis. I am especially thankful that he listened to many of my problems, both academic and personal, with great patience, and offered many valuable advice. I consider myself lucky to have Professor Haskell as my advisor.

I would also like to thank my committee members: Professor Floyd, Professor Linnell, and Professor Sharpe, for their help and support. Especially, I would like to thank Professor Sharpe, who is my Ph.D. advisor in physics, for his support of me in pursuing this Master's degree in mathematics simultaneously with my Ph.D. study in physics.

Finally, I would like to thank my parents and my wife for their support and understanding. I could not have done anything without them.

\tableofcontents
\pagebreak

\pagenumbering{arabic}
\pagestyle{myheadings}

\chapter{Introduction}

This thesis is devoted to a discussion of the relationship between D-branes and $K$-homology. The geometric nature of topological $K$-homology, which provides a direct classification of D-branes in string theory, captures the physical information of D-branes.

\subsection*{D-branes in String Theory}
The basic idea of string theory is to abandon the concept of point-like particles, which causes much theoretical trouble for physicists. Instead, string theory assumes that the fundamental building blocks of our world are tiny one-dimensional strings, which are so small that they look like particles at low energies. The large number of fundamental particles that have been observed so far can be interpreted as different quantum states of strings. One of the biggest achievements of string theory is that it contains gravity, and thus serves as a quantum theory of gravity.

There are two types of fundamental strings: open strings and closed strings. The trajectory of a string in spacetime is a two-dimensional surface, called the string worldsheet, on which a two-dimensional (supersymmetric) conformal field theory can be defined to characterize the dynamics of this string. The worldsheet of an open string is a two-dimensional disk with holes inside it, while the worldsheet of a closed string is a Riemann surface. The more advanced way of characterizing the nature of fundamental strings is to think of \emph{a string as characterized by a two-dimensional surface (usually with more structure) with a map from this surface into the spacetime manifold, with the image of this map being identified as the worldsheet of this string}. Later we will propose a similar picture for D-branes.

It took string theorists many years to realize that string theory is not just about strings. There are various extended objects that live in the ten dimensional spacetime manifold of string theory; they all have very interesting dynamics of their own. A large class of such extended objects in string theory is called the class of D-branes. Roughly speaking, a D-brane is an extended dynamical object in spacetime on which open strings can end. As such, it provides Dirichlet boundary conditions for open strings, hence the name. The importance of D-branes' physics was not realized by mainstream researchers until \cite{Polchinski-brane}. This realization is one of the many inspiring aspects of the second string revolution, which happened around the middle of the 1990s. Since then, the study of D-branes has become a fundamental aspect of string theory research and has led to tremendous development in many directions, such as homological mirror symmetry on the mathematics side, and the celebrated AdS/CFT conjecture on the physics side.

It is important to emphasize that string theory, with its many contributions to both physics and mathematics, is still not a completely formulated theory. Various aspects of string theory have been discovered, yet a fundamental formulation is still missing. Even the subject of the dynamics of a free string propagating in a general background spacetime is not fully understood. As a consequence, there is not a fundamental definition of D-branes that captures all of the information about them.

D-branes, as extended dynamical objects, can wrap submanifolds of spacetime. These submanifolds represent the trajectories of D-branes in spacetime. They are called D-brane worldvolumes. At low energies, the dynamics of a D-brane can be approximated by a (supersymmetric) gauge theory living on its worldvolume. Mathematically, this means that we have a vector bundle (with connection) over that submanifold; physically this vector bundle describes the associated gauge interaction. One of the major tasks of D-brane research is to find a more fundamental way of characterizing D-branes and their dynamics, either through physical constructions or mathematical constructions.

\subsection*{$K$-theory}
Mathematically, the study of $K$-theory originated as the study of a ring generated by equivalence classes of vector bundles over a topological space or scheme. More precisely, what we will discuss is called topological $K$-theory. Because it is the theory directly used in string theory, we will simply refer to it as $K$-theory. Abstractly, it is a contravariant functor from the category of topological spaces to the category of commutative rings. Let $f: E\rightarrow F$ be a homomorphism of complex vector bundles over a smooth manifold $X$. The pair $(E,F)$ determines an element in $K^0(X)$, by the equivalence relationship $(E,F)\sim (E\oplus W, F\oplus W)$ for any complex vector bundle $W$ over $X$.

The appearance of vector bundles over D-branes suggests the possibility of representing D-branes as $K$-cocyles. Let's consider D-branes in type IIB string theory as an example. Consider the physical configuration of D9-branes and anti-D9-branes in type IIB string theory: let $E$ be the gauge bundle of the D9-branes, while $F$ is the gauge bundle of the anti-D9-branes. Physically, there is an annihilation process, called tachyon condensation, between D-branes and anti-D-branes with isomorphic gauge bundles. If we add to the configuration any D9-anti-D9-brane pairs with the same gauge bundle W, then physically these added branes cancel each other via tachyon condensation over the entire spacetime, leaving us with the original branes. In other words, tachyon condensation over the entire spacetime is the same as the equivalence relation defining $K$-theory elements. Therefore, D9-anti-D9-branes can be represented by $K$-theory elements of the spacetime manifold.

A further exploration of tachyon condensation leads to a $K$-theoretic interpretation of lower dimensional D-branes, starting from the above construction. Suppose there is a vector bundle map $f: E\rightarrow F$ that is an isomorphism outside a subspace $S$ of $X$. Physically, the triple $(E,F,f)$ represents a D-brane configuration wrapping the subspace $S$, ``collapsed'' from the D9-brane-anti-D9-branes represented by $(E,F)$; the map $f$ is precisely the tachyon condensation, because it is an isomorphism outside $S$.  This example, in which $S$ has dimension lower than ten, suggests that there should be a $K$-theoretic interpretation of some properties of D-branes of all dimensions.

What is precisely the information about D-branes that is encoded in this $K$-theoretic interpretation of D-branes? D-branes are charged under the Ramond-Ramond form fields from the string spectra, and these charges are preserved under tachyon condensation. Therefore, \emph{the information about D-branes that can be represented by the $K$-theory elements of spacetime is precisely these charges} \cite{minasian-moore-ktheory, witten-ktheory, horava-ktheory}:
\begin{itemize}
\item Type IIA string theory $\leftrightarrow \tilde{K}^1(X)$
\item Type IIB string theory $\leftrightarrow \tilde{K}^0(X)$
\item Type I string theory $\leftrightarrow \widetilde{KO}(X)$
\end{itemize}

There are many important aspects of this $K$-theoretic interpretation of D-branes. One excellent example is the Bott periodicity of complex $K$-groups \cite{bott-peorid}, namely there are exactly two $K$-groups $K^0(X)$ and $K^1(X)$. This mathematical property corresponds precisely to the fact that there are only two kinds of type II string theories: type IIA string theory and type IIB string theory. In the case of type I string theory, Bott periodicity of real $K$-theory groups provides of the existence of many new types of D-branes in type I string theory by the Bott periodicity of real $K$-groups. Another example is that $K$-theory requires D-branes to wrap on manifolds with spin$^c$ structures, which coincides with a requirement arising from anomaly computation.

\subsection*{$K$-homology}
The information contained in a D-brane configuration is rather geometric in nature: vector bundles over subspaces. It is then natrual to look for a geometric way of representing D-branes using some versions of homology cycles, rather than the $K$-cocycles in $K$-theory, which is a generalized cohomology theory. One can formulate the homological dual of $K$-theory, which is usually called $K$-homology. There are various definitions of $K$-homology in different situations; the topological definition of the $K$-homology of a topological space $X$ is given in \cite{baum-douglas} as certain equivalence classes of $K$-cycles, which are triples of the form $[M,E,f]$ satisfying
\begin{itemize}
    \item $M$ is a compact spin$^c$ manifold
\item $E$ is a complex vector bundle over $M$
\item $f: M \rightarrow X$ is a continuous map
\end{itemize}
This is the dual of the $K$-theory of complex vector bundles on $X$.

The relationship between D-branes and $K$-homology has been explored previously \cite{harvey-moore-khomology, asakawa-khomology, szabo, reis-szabo-khomology, baum-khomology-brane}. It will be the main topic of this thesis. In this thesis, we propose that \emph{the $K$-homology groups of spacetime provide a direct classification of D-branes, in the sense that the image $\phi(M)$ of a $K$-cycle $[M,E,f]$ should be interpreted as the worldvolume of a D-brane configuration}. This way the geometric structure of $K$-cycles expresses the physical structure of D-branes, in contrast to the information about D-brane charges included in $K$-theory.

The $K$-theory equivalence relation is naturally interpreted as tachyon condensation from the physics point of view. Correspondingly, this process is preserved from the $K$-homology point of view: the equivalence relations that define an element of the $K$-homology group correspond precisely to tachyon condensation, too. The details can be found in Section 3 of Chapter 3.

We would like to emphasize one property of our proposal, namely it is very much like the modern way of characterizing fundamental strings, which are characterized by two-dimensional surfaces with maps into spacetime. Our proposal treats the worldvolumes of D-branes and the worldsheets of fundamental strings in a analogous fashion.

Our proposal could lead in many other directions. On one hand, it provides a concrete application of the idea of $K$-homology; one might hope to further expand the current knowledge about D-branes using relevant mathematics. On the other hand, further research on the properties of D-branes in physics might provide more insights about $K$-homology itself, as usually happens in the field of physical mathematics.

This thesis is organized as follows. In Chapter 2, we introduce topological $K$-theory and analyze some of its properties. Then we discuss how one can use the $K$-theory group of spacetime to classify the charges of D-branes in type II string theories. In Chapter 3, we move on to introduce topological $K$-homology. We propose a relationship between the $K$-homology group of spacetime and classes of D-branes in type II string theories. Furthermore, we discuss how one can characterize type II D-branes using $K$-cycles, whose equivalence classes define the $K$-homology group of spacetime. Finally, in Chapter 4 we present the conclusion and further discussion.

\pagebreak

\chapter{$K$-theory and D-branes}

\section{Vector Bundles}
Topological $K$-theory is the group of certain equivalence classes of vector bundles. First we briefly introduce the notion of a vector bundle, and some of its relevant properties. We assume that all maps are continuous, unless stated otherwise.

\begin{definition}
Let $X$ be a topological space. A \emph{vector bundle} over $X$ is a surjective map $\pi : E\rightarrow X$, such that
\begin{itemize}
\item $E_x=\pi^{-1}(x)$ is a vector space over some field $k$ for all $ x\in X$;

\item There exists an open cover $\{U_{\alpha}\}$ of $X$ with associated homeomorphisms
 \begin{equation*}
 h_{\alpha}: \pi^{-1}(U_{\alpha})\rightarrow U_{\alpha}\times k^n
 \end{equation*}
 where $n\in \mathbb{Z}$, such that the restriction
 \begin{equation*}
 h_{\alpha}|_x: \pi^{-1}(x)\rightarrow \{x\}\times k^n
 \end{equation*}
 is an vector space isomorphism for all $ x\in U_{\alpha}$.
\end{itemize}
$E$ is called the \emph{total space}, $E_x$ is called the \emph{fiber}, $X$ is called the \emph{base space}, while $(\{U_{\alpha}\},\{h_{\alpha}\})$ is called the \emph{local trivialization}.
\end{definition}

Note that the dimension $n$ of $E_x$ is only a constant locally on each connected component of $X$. If it is a constant over $X$, we say the vector bundle has rank $n$. We usually also refer to the total space $E$ when we talk about vector bundle.

\begin{example}
The Cartesian product $X\times k^n$ is a vector bundle, called the trivial bundle for the obvious reason.
A slightly nontrivial example is the M\"{o}bius strip, which is a real line bundle (a real vector bundle of rank 1) over $S^1$.
\end{example}

\begin{definition}
A \emph{global section} of a vector bundle $\pi : E\rightarrow X$ is a map $s : X\rightarrow E$ such that $\pi\circ s(x)=x$ for all $ x\in X$. A \emph{local section} over an open subset $U\subseteq X$ is a map $s_U : U\rightarrow E$ such that $\pi\circ s_U(x)=x$ for all $ x\in U$.
\end{definition}

\begin{example}
Let $M$ be a smooth manifold of dimension $n$. The tangent bundle of $M$, denoted by $TM$, is the disjoint union of the tangent spaces of $M$, which is a vector bundle of rank $n$. A global section of $TM$ is simply a vector field on $M$.
\end{example}

\begin{definition}
Let $\pi: E\rightarrow X$ and $\rho: F\rightarrow Y$ be two vector bundles over the same field. Then a \emph{vector bundle map} is a pair of maps $f: E\rightarrow F$ and $g: X\rightarrow Y$, such that
\begin{itemize}
\item $\rho\circ f = g\circ\pi$, i.e. the following diagram commutes
\begin{equation*}
\begin{CD}
E @>f>> F\\
@VV\pi V @VV\rho V\\
X @>g>> Y
\end{CD}
\end{equation*}
\item $f_x: E_x\rightarrow F_{g(x)}$ is a linear map between vector spaces for all $ x\in X$.
\end{itemize}
\end{definition}

Clearly, with this definition of vector bundle map, we can assign a category structure to the collection of all vector bundles: the objects being the vector bundles, and the morphisms being the vector bundle maps.

\begin{definition}
Given a vector bundle $\pi:E\rightarrow Y$, and a map $f:X\rightarrow Y$ between two topological spaces $X$ and $Y$, we define the \emph{pullback bundle} $f^*E\rightarrow X$ as
\begin{equation*}
f^*E=\{ (x,e)\in X\times E | f(x)=\pi(e) \}.
\end{equation*}
\end{definition}

\begin{remark}
Note that we can perform linear algebraic operations, such as direct sum and tensor product, on each fiber $E_x$, which is a vector space. This leads to the following operations on vector bundles:
\begin{itemize}
\item The direct sum (Whitney sum) of vector bundles $E\rightarrow X$ and $F\rightarrow X$ is a vector bundle $E\oplus F\rightarrow X$, whose fiber is $E_x\oplus F_x$.
\item The tensor product of vector bundles $E\rightarrow X$ and $F\rightarrow X$ is a vector bundle $E\otimes F\rightarrow X$, whose fiber is $E_x\otimes F_x$.
\end{itemize}
\end{remark}

Let $Vect(X)$ denote the set of all vector bundles over $X$. Clearly, $Vect(X)$ is an abelian semigroup under the direct sum operation. Our goal in the next section is to construct from $Vect(X)$ an abelian group, which will be the $K$-theory group of the space $X$. From now on, we will mostly assume that we work with $k=\mathbb{C}$, unless otherwise stated.

\begin{lemma} \label{complementary-bundle}
For any vector bundle $E$ over a compact Hausdorff space $X$, there exists a vector bundle $F\rightarrow X$ such that $E\oplus F \cong \varepsilon^n$, where $\varepsilon^n = X\times \mathbb{C}^n$ for some natural number $n$.
\end{lemma}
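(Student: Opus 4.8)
The plan is to realize $E$ as a subbundle of a trivial bundle and then take an orthogonal complement. First I would use the local triviality of $E$ together with the compactness of $X$: cover $X$ by finitely many open sets $U_1,\dots,U_m$ over each of which $E$ is trivial, with trivializations $h_i:\pi^{-1}(U_i)\rightarrow U_i\times\mathbb{C}^{n_i}$, and let $p_i:\pi^{-1}(U_i)\rightarrow\mathbb{C}^{n_i}$ denote the composition of $h_i$ with projection onto the second factor, so that $p_i$ restricts to a linear isomorphism on each fiber $E_x$ for $x\in U_i$. Since a compact Hausdorff space is normal, I can then choose a partition of unity $\{\rho_i\}_{i=1}^m$ subordinate to the cover $\{U_i\}$.

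Next I would assemble these local data into a single global bundle map. For each $i$, define $g_i:E\rightarrow\mathbb{C}^{n_i}$ by $g_i(e)=\rho_i(\pi(e))\,p_i(e)$ when $e\in\pi^{-1}(U_i)$ and $g_i(e)=0$ otherwise; because $\rho_i$ vanishes outside a closed set contained in $U_i$, this is well defined and continuous, and it is linear on each fiber. Setting $N=n_1+\dots+n_m$, the pair consisting of $g=(g_1,\dots,g_m):E\rightarrow X\times\mathbb{C}^N$ and the identity on $X$ is a vector bundle map. The crucial point is that $g$ is injective on every fiber: given $x\in X$ and a nonzero $v\in E_x$, there is some $i$ with $\rho_i(x)>0$, and then $g_i(v)=\rho_i(x)\,p_i(v)\neq 0$ since $p_i$ is an isomorphism on $E_x$. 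Hence $g$ identifies $E$ with a sub-bundle $E'\subseteq\varepsilon^N$ whose fibers have locally constant rank and are locally direct summands.

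Finally I would produce the complement. Equip $\varepsilon^N=X\times\mathbb{C}^N$ with its standard fiberwise Hermitian inner product and set $F_x=(E'_x)^{\perp}\subseteq\mathbb{C}^N$. One checks that $F=\bigcup_{x}F_x$ is again a sub-bundle of $\varepsilon^N$, since the orthogonal projection of $\mathbb{C}^N$ onto $E'_x$ varies continuously with $x$ and thus so does its complementary projection; and fiberwise $E'_x\oplus F_x=\mathbb{C}^N$. Combining with $E\cong E'$, this gives $E\oplus F\cong\varepsilon^N$, which is the claim with $n=N$.

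The main obstacle is the step in the second paragraph: passing from purely local trivializations to one globally defined, fiberwise-injective bundle map into a trivial bundle of finite rank. This is exactly the point where compactness is used (to keep the cover, and hence $N$, finite) and where normality is used (to obtain the partition of unity that glues the local isomorphisms). Once the embedding $E\hookrightarrow\varepsilon^N$ is in hand, the orthogonal-complement construction is routine fiberwise linear algebra, the only thing to verify being continuity in $x$, which follows from continuity of orthogonal projection as a function of the subspace.
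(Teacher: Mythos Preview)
The paper actually states this lemma without proof; it is quoted as a standard fact and then the text moves on immediately to fiber bundles. So there is no ``paper's own proof'' to compare against.

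Your argument is the classical one (essentially the proof in Atiyah's \emph{K-theory} or Hatcher's notes) and it is correct. The only places where a reader might want a word more of justification are: (i) that the image $E'=g(E)$ really is a sub-bundle of $\varepsilon^N$, not merely a union of subspaces---this follows because on the open set $\{\rho_i>0\}$ the single component $g_i$ is already a fiberwise isomorphism onto a trivial summand, giving local triviality of $E'$; and (ii) that the fiberwise orthogonal complement is again locally trivial, which, as you say, comes from the continuity of orthogonal projection onto a continuously varying subspace. Both points are routine, and you have flagged them appropriately.
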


In general, one can think of bundles whose fibers are not necessarily vector spaces. This leads to the following definition of a fiber bundle, analogous to the definition of vector bundles
\begin{definition}
Let $X$ be a topological space. A \emph{fiber bundle} over $X$ is a surjective map $\pi : E\rightarrow X$, such that
\begin{itemize}
\item $F=\pi^{-1}(x)$ is a topological space for all $ x\in X$;

\item there exists  an open cover $\{U_{\alpha}\}$ of $X$ with associated homeomorphisms
 \begin{equation*}
 h_{\alpha}: \pi^{-1}(U_{\alpha})\rightarrow U_{\alpha}\times F
 \end{equation*}
 such that the restriction
 \begin{equation*}
 h_{\alpha}|_x: \pi^{-1}(x)\rightarrow \{x\}\times F
 \end{equation*}
 is a homeomorphism for all $ x\in U_{\alpha}$.
\end{itemize}
$E$ is called the \emph{total space}, $F$ is called the \emph{fiber}, $X$ is called the \emph{base space}, while $(\{U_{\alpha}\},\{h_{\alpha}\})$ is called the the \emph{local trivialization}.
\end{definition}

\begin{definition}
Let $G$ be a topological group. A \emph{principal $G$-bundle} is a fiber bundle $P\rightarrow X$ with a continuous right $G$ action on $P$ such that $G$ preserves the fibers of $P$ and acts freely and transitively on them. Then one can define the \emph{associated vector bundle} of $P$ with respect to a representation $V$ of $G$ as
\begin{equation}
E=P\times_G V := P\times V/(p,v)\sim (pg, g^{-1}v), \quad \text{for all} \ p\in P, v\in V, g\in G
\end{equation}
\end{definition}

In both physics and mathematics, use of the concept of spinors is in many cases inevitable. A spinor is a section of a \emph{spinor bundle}, which is the associated vector bundle of the principal bundle $P$ in the following definition:

\begin{definition}
Let $\pi:E\rightarrow X$ be a real vector bundle of rank $n$, with its frame bundle denoted as $P_{\text{SO}(n)}$. A \emph{spin structure} on $E$ is a principal Spin$(n)$-bundle $P_{\text{Spin}(n)}\rightarrow X$, together with a equivariant double covering map $\phi:P_{\text{Spin}(n)}\rightarrow P_{\text{SO}(n)}$, i.e. $\phi(pg)=\phi(p)\psi(g)$ for all $ p\in P_{\text{Spin}(n)}, g\in$ Spin$(n)$, where $\psi:$ Spin$(n)\rightarrow$SO$(n)$ is the double covering homomorphism.
\end{definition}

Similarly, one can define the spin$^c$ structure on a vector bundle, using the Spin$^c(n)$ group which satisfies the short exact sequence
\begin{equation*}
1\rightarrow \mathbb{Z}_2\rightarrow \text{Spin}^c(n)\stackrel{\theta}{\rightarrow} \text{SO}(n)\times \text{U}(1)\rightarrow 1
\end{equation*}
Then as in the case of spin structure, we can define
\begin{definition}
Let $\pi:E\rightarrow X$ be a real vector bundle of rank $n$, with its frame bundle denoted as $P_{\text{SO}(n)}$. A \emph{spin$^c$ structure} on $E$ is a principal U$(1)$-bundle $P_{\text{U}(1)}\rightarrow X$, and a principal Spin$^c(n)$-bundle $P_{\text{Spin}^c(n)}\rightarrow X$ together with a equivariant bundle map $\phi:P_{\text{Spin}^c(n)}\rightarrow P_{\text{SO}(n)}\times P_{\text{U}(1)}$, i.e. $\phi(pg)=\phi(p)\theta(g)$ for all $ p\in P_{\text{Spin}^c(n)}, g\in$ Spin$^c(n)$, where $\theta:$ Spin$^c(n)\rightarrow$ SO$(n)\times$U$(1)$ is the defining homomorphism in the above short exact sequence.
\end{definition}

\begin{definition}
If there is a spin (spin$^c$) structure on the tangent bundle of an oriented Riemannian manifold, then this manifold is called a \emph{spin (spin$^c$) manifold}.
\end{definition}

\begin{remark}
Let $X$ be a spin$^c$ manifold with dimension $n$. If $n$ is even, then there is only one fundamental spinor bundle $S(X)$ which splits $S(X)=S^+(X)\oplus S^-(X)$, which in physics represents the two chiralities of fermions in even dimensions. If $n$ is odd, there is again only one fundamental spinor bundle $S(X)$ which does not split.
\end{remark}

\section{Characteristic Classes}
Characteristic classes, which are cohomology classes, are a necessary (but not sufficient) measure of  how a vector bundle is different from a trivial bundle. The natural characteristic classes of a complex vector bundle are the Chern classes, which we define now, using the axiomatic approach:

\begin{definition}
Let $E$ be an element in $Vect(X)$ of a topological space $X$. Then there exists a unique sequence of maps $c_i: Vect(X)\rightarrow H^{2i}(X;\mathbb{Z})$, such that\\
(1) $c_i(f^*(E))=f^*(c_iE)$ for all pullbacks $f^*(E)$.\\
(2) Let $c(E)=1+c_1+c_2+...\in H^*(X,\mathbb{Z})$. Then $c(E\oplus F)=c(E)\cup c(F)$.\\
(3) $c_i(E)=0$ for all $i$ larger than the rank of $E$.\\
(4) Let $L$ be the canonical line bundle over $\mathbb{C}P^{1}$. Then $c_1(L)$ is the generator of $H^2(\mathbb{C}P^{1},\mathbb{Z})$ specified in advance.\\
$c(E)$ is called the \emph{total Chern class} of $E$, while $c_i(E)$ is called the \emph{$i$th Chern class} of $E$.
\end{definition}

Note that the last condition is a normalization condition as well as a nontriviality condition. There are several other definitions of Chern classes, some of which are equivalent to the above axiomatic one, such as using the pullback of the generators of the cohomology of the classifying space of $U(n)$ bundles, while others are weaker than the above one, such as the Chern-Weil approach.

\begin{example}
Let's consider a trivial bundle $E=X\times \mathbb{C}^n$. Let $f:X\rightarrow \text{point}$ be the map to a point, then $E=f^*(\text{point}\times \mathbb{C}^n)$. Since $H^i(\text{point},\mathbb{Z})=0$ for all $i>0$, condition (1) means $c_i(E)=0$ for all $i>0$. Thus trivial bundles have trivial Chern classes. However, a nontrivial bundle might also have trivial Chern classes.
\end{example}

\begin{proposition} \textbf{(Splitting principle)}
Let $E\rightarrow X$ be a complex vector bundle of rank $n$. Then there is a space $F(E)$ with a map $p:F(E)\rightarrow X$ such that
\begin{equation*}
p^*(E)=L_1\oplus L_2\oplus ... \oplus L_n
\end{equation*}
where $L_i\rightarrow F(E)$ are line bundles. Furthermore, the induced homomorphism on cohomology $p^*: H^*(X,\mathbb{Z})\rightarrow H^*(F(E),\mathbb{Z})$ is injective.
\end{proposition}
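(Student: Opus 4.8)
The plan is to reduce to the rank-one case by an induction on $n$, the key construction at each step being the projectivization of the vector bundle. First I would introduce the projective bundle $\mathbb{P}(E) \xrightarrow{q} X$, whose fiber over $x \in X$ is the projective space $\mathbb{P}(E_x)$ of lines in $E_x$. By construction there is a tautological line sub-bundle $L_1 \subseteq q^*E$ over $\mathbb{P}(E)$: over a point $\ell \in \mathbb{P}(E_x)$, the fiber of $L_1$ is the line $\ell$ itself, sitting inside $(q^*E)_\ell = E_x$. Choosing a Hermitian metric on $E$ (available since $X$ is, in our setting, paracompact/compact Hausdorff), we get an orthogonal splitting $q^*E \cong L_1 \oplus E'$, where $E' = L_1^{\perp}$ is a complex vector bundle of rank $n-1$ over $\mathbb{P}(E)$.

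Next I would verify that $q^* : H^*(X;\mathbb{Z}) \to H^*(\mathbb{P}(E);\mathbb{Z})$ is injective. This is the Leray--Hirsch theorem: the fiber is $\mathbb{C}P^{n-1}$, whose integral cohomology is free, generated by $1, t, t^2, \dots, t^{n-1}$ where $t = c_1(L_1^*)$ (or $c_1$ of the dual tautological bundle) restricts on each fiber to the standard generator and its powers. Leray--Hirsch then gives that $H^*(\mathbb{P}(E);\mathbb{Z})$ is a free module over $H^*(X;\mathbb{Z})$ with basis $1, t, \dots, t^{n-1}$; in particular $q^*$ is a split injection. Iterating: apply the same construction to $E' \to \mathbb{P}(E)$, obtaining $\mathbb{P}(E') \to \mathbb{P}(E)$ with a tautological line bundle and an orthogonal complement of rank $n-2$, and so on. After $n-1$ steps we reach a space $F(E)$ with a map $p : F(E) \to X$, factoring as a composite of projective bundle projections, such that $p^*E \cong L_1 \oplus L_2 \oplus \cdots \oplus L_n$ with each $L_i$ a line bundle. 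Since $p^*$ is a composite of injective maps on cohomology (each a split injection by Leray--Hirsch), $p^* : H^*(X;\mathbb{Z}) \to H^*(F(E);\mathbb{Z})$ is injective.

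The main obstacle is establishing the Leray--Hirsch input rigorously: one must exhibit classes in $H^*(\mathbb{P}(E);\mathbb{Z})$ that restrict fiberwise to a basis of $H^*(\mathbb{C}P^{n-1};\mathbb{Z})$, and this requires either citing the Leray--Hirsch theorem for fiber bundles with the relevant cohomological hypotheses, or computing the cohomology of $\mathbb{P}(E)$ directly via a Mayer--Vietoris / spectral sequence argument over a trivializing cover of $X$. I would take the former route, invoking Leray--Hirsch as a black box, since the powers of $c_1$ of the tautological line bundle manifestly supply the needed fiberwise basis. Everything else — the existence of the Hermitian metric, the orthogonal complement being a genuine vector bundle, the bookkeeping of the iteration — is routine and I would treat it briefly.
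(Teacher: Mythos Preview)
Your proof is correct and is the standard argument. The paper itself does not give a proof of this proposition; it simply states the result and then remarks, immediately afterward, that $F(E)$ is the \emph{flag bundle} associated to $E$, i.e.\ the bundle whose fiber over $x$ consists of complete flags $V_1\subset V_2\subset\cdots\subset V_n=E_x$. Your iterated projectivization is exactly the step-by-step construction of this flag bundle: $\mathbb{P}(E)$ picks out $V_1$, then $\mathbb{P}(E')$ over it picks out $V_2/V_1$, and so on, so the tower $F(E)\to\cdots\to\mathbb{P}(E)\to X$ you build is precisely the flag bundle the paper names. Thus your approach and the paper's (implicit) one coincide; you have simply supplied the details the paper omits, including the Leray--Hirsch argument for injectivity of $p^*$.
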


One can show that $F(E)$ is a fiber bundle over $X$, whose fiber is the \emph{flag} of the fiber of $E$, namely $V_1\subset V_2\subset...\subset V_n$ where $V_i$ is a dimension $i$ linear subspace of the fiber of $E$. As such, $F(E)$ is called the \emph{flag bundle} associated to $E$.

\begin{definition}
Denote $c_1(L_i)$ as $x_i$. These are called the \emph{Chern roots} of $E$.
\end{definition}

\begin{corollary}
\begin{equation*}
p^*c(E)=1+\sum_{1\leqslant i\leqslant n} x_i +\sum_{1\leqslant i<j\leqslant n} x_ix_j+\cdots
\end{equation*}
i.e. $p^*c_i(E)$ is the $k$th elementary symmetric product of the Chern roots of $E$.
\end{corollary}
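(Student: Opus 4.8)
The plan is to feed the decomposition supplied by the Splitting Principle into the Whitney sum axiom for Chern classes. First I would invoke the preceding Proposition to obtain the flag bundle $p\colon F(E)\to X$ together with the splitting $p^*E\cong L_1\oplus L_2\oplus\cdots\oplus L_n$ into line bundles, and I would record the fact, also furnished by that Proposition, that $p^*\colon H^*(X;\mathbb{Z})\to H^*(F(E);\mathbb{Z})$ is injective; this injectivity is what will let me transport the identity back down to $X$.

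Next I would compute $p^*c(E)$. By the naturality axiom (1), $p^*c(E)=c(p^*E)$. Applying the Whitney sum axiom (2) — extended from two summands to $n$ summands by an immediate induction on $n$ — gives $c(p^*E)=c(L_1)\cup c(L_2)\cup\cdots\cup c(L_n)$. Because each $L_i$ has rank one, axiom (3) forces $c_j(L_i)=0$ for all $j\geq 2$, so $c(L_i)=1+c_1(L_i)=1+x_i$ by the definition of the Chern roots. Hence $p^*c(E)=\prod_{i=1}^n(1+x_i)$, and expanding this product and grouping terms by cohomological degree identifies the degree-$2i$ component of $p^*c(E)$ with the $i$th elementary symmetric polynomial $e_i(x_1,\dots,x_n)$, which is the asserted formula. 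If one wants the statement phrased downstairs on $X$, the injectivity of $p^*$ shows that $c_i(E)$ is the unique class whose pullback is $e_i(x_1,\dots,x_n)$.

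I do not expect a genuine obstacle here: the argument is essentially formal once the Splitting Principle is in hand. The only points requiring a little care are the induction that promotes the two-term Whitney formula to an $n$-term product, and the use of the rank bound in axiom (3) to discard the higher Chern classes of the line bundles $L_i$; everything else is bookkeeping in the graded ring $H^*(F(E);\mathbb{Z})$.
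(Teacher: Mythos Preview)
Your argument is correct and is exactly the standard one: naturality gives $p^*c(E)=c(p^*E)$, the Whitney sum axiom turns this into $\prod_i c(L_i)$, and the rank bound collapses each factor to $1+x_i$. The paper in fact states this corollary without proof, treating it as an immediate consequence of the Splitting Principle and the axioms, so your write-up simply fills in the routine details that the paper leaves implicit.
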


\begin{remark}
Each elementary symmetric product of the Chern roots of $E$ determines a Chern class of $p^*E$ as element in $H^*(F(E),\mathbb{Z})$. Since $p^*: H^*(X,\mathbb{Z})\rightarrow H^*(F(E),\mathbb{Z})$ is an injective ring homomorphism, we see each elementary symmetric product of the Chern roots of $E$ also determines an element in $H^*(X,\mathbb{Z})$. Therefore, schematically we can write $c(E)=1+\sum_i x_i +\sum_{i<j} x_ix_j+\cdots$.
\end{remark}

\begin{definition}
The \emph{Chern character} of a vector bundle $E\rightarrow X$ is defined as
\begin{equation*}
ch^{\bullet}(E)=\sum_i e^{x_i}=\sum_i\left(\sum_{k=0}^{\infty}\frac{x_i^k}{k!}\right)\in H^*(X,\mathbb{Q})
\end{equation*}
\end{definition}

\begin{lemma}
Let $E,F$ be two vector bundles over $X$. Then
\begin{equation*}
\begin{split}
& ch^{\bullet}(E\oplus F)=ch^{\bullet}(E)+ch^{\bullet}(F)\\
& ch^{\bullet}(E\otimes F)=ch^{\bullet}(E)\cup ch^{\bullet}(F)
\end{split}
\end{equation*}
\end{lemma}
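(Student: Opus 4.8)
The plan is to reduce everything to the case of line bundles via the splitting principle, compute there using Chern roots, and then descend along the injective map $p^*$ on cohomology. First I would apply the splitting principle twice: given $E \to X$ of rank $n$ and $F \to X$ of rank $m$, choose $p_1 : F(E) \to X$ with $p_1^*E = L_1 \oplus \cdots \oplus L_n$ a sum of line bundles, then apply the splitting principle again to $p_1^*F \to F(E)$ to obtain $p_2 : Y \to F(E)$ with $p_2^*p_1^*F = L_1' \oplus \cdots \oplus L_m'$. Setting $p = p_1 \circ p_2 : Y \to X$, both $p^*E$ and $p^*F$ decompose into line bundles simultaneously, and $p^* : H^*(X;\mathbb{Q}) \to H^*(Y;\mathbb{Q})$ is injective since it is a composite of injective maps. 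Write $x_i = c_1(L_i)$ (pulled back to $Y$) for the Chern roots of $E$ and $y_j = c_1(L_j')$ for those of $F$; by naturality of the Chern character under pullback it suffices to verify both identities after applying $p^*$.

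For the direct sum, the key observation is that $p^*(E \oplus F) = (L_1 \oplus \cdots \oplus L_n) \oplus (L_1' \oplus \cdots \oplus L_m')$, so by multiplicativity of the total Chern class (axiom (2)) its Chern roots are exactly $x_1, \dots, x_n, y_1, \dots, y_m$. Hence
\begin{equation*}
p^*ch^{\bullet}(E \oplus F) = \sum_{i=1}^n e^{x_i} + \sum_{j=1}^m e^{y_j} = p^*ch^{\bullet}(E) + p^*ch^{\bullet}(F),
\end{equation*}
and injectivity of $p^*$ gives the claim.

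For the tensor product, I would first record the sub-fact that for line bundles $c_1(L \otimes L') = c_1(L) + c_1(L')$; this follows because line bundles over a space are classified by $H^2(-;\mathbb{Z})$ with tensor product corresponding to addition (equivalently, it can be extracted from the axioms together with the behavior of $\mathcal{O}(1)$ on products of projective spaces). Granting this, $p^*(E \otimes F) = \bigoplus_{i,j} (L_i \otimes L_j')$ is a sum of line bundles whose Chern roots are $x_i + y_j$ for $1 \le i \le n$, $1 \le j \le m$. Therefore
\begin{equation*}
p^*ch^{\bullet}(E \otimes F) = \sum_{i,j} e^{x_i + y_j} = \sum_{i,j} e^{x_i} e^{y_j} = \Bigl(\sum_i e^{x_i}\Bigr)\Bigl(\sum_j e^{y_j}\Bigr) = p^*\bigl(ch^{\bullet}(E) \cup ch^{\bullet}(F)\bigr),
\end{equation*}
and again injectivity of $p^*$ finishes the proof.

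The routine parts are the two displayed manipulations of exponentials, which are immediate. The main obstacle — really the only point requiring care — is the bookkeeping of the double splitting: one must make sure the splitting principle is applied in a way that leaves $E$ still split after pulling back to the second flag bundle, and that the Chern roots $x_i$ of $E$ are genuinely independent of (and unaffected by) the second pullback, so that the formula $ch^{\bullet}(E \otimes F) = \sum_{i,j} e^{x_i+y_j}$ is legitimate. Once the ambient space $Y$ carrying a simultaneous splitting is in hand and $p^*$ is known to be injective, both identities drop out of the line-bundle computation.
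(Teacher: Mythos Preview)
Your proof is correct and follows the natural line of argument: apply the splitting principle to reduce to line bundles, compute with Chern roots, and descend via the injectivity of $p^*$. The paper does not actually prove this lemma; it states the result immediately after introducing the splitting principle and Chern roots, evidently leaving the verification to the reader. Your argument is precisely the intended one given that setup, so there is nothing to compare --- you have filled in what the paper omits, and done so cleanly.
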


\begin{definition}
Let $E\rightarrow X$ be a complex vector bundle. Then the \emph{Todd class} of $E$ is
\begin{equation*}
Td(E)=\prod_i \frac{x_i}{1-e^{-x_i}}\in H^*(X,\mathbb{Q})
\end{equation*}
The \emph{$\hat{A}$ genus} of $E$ is
\begin{equation*}
\hat{A}(E)=\prod_i \frac{x_i/2}{\sinh(x_i/2)}\in H^*(X,\mathbb{Q})
\end{equation*}
\end{definition}

\begin{lemma}
\begin{equation*}
Td(E)=e^{c_1(E)/2}\hat{A}(E)
\end{equation*}
\end{lemma}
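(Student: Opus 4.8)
The plan is to reduce the identity to a single-variable formal power series identity in the Chern roots and then take a product. First I would invoke the splitting principle (the Proposition above): there is $p\colon F(E)\to X$ with $p^*E = L_1\oplus\cdots\oplus L_n$, and the induced map $p^*\colon H^*(X;\mathbb{Q})\to H^*(F(E);\mathbb{Q})$ is injective. Writing $x_i = c_1(L_i)$ for the Chern roots, one has $p^*c_1(E)=\sum_i x_i$ (the first elementary symmetric function), so $p^*\!\left(e^{c_1(E)/2}\right)=\prod_i e^{x_i/2}$, while $p^*Td(E)=\prod_i \frac{x_i}{1-e^{-x_i}}$ and $p^*\hat{A}(E)=\prod_i \frac{x_i/2}{\sinh(x_i/2)}$. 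Thus it suffices to prove the identity in $H^*(F(E);\mathbb{Q})$ and then descend via injectivity of $p^*$.

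Since all three classes are now products over $i$ of a function of the single variable $x_i$, the whole statement follows from the one-variable identity
\[
\frac{x}{1-e^{-x}} \;=\; e^{x/2}\,\frac{x/2}{\sinh(x/2)},
\]
understood as an identity in $\mathbb{Q}[[x]]$ (both sides have constant term $1$, so every manipulation below is legitimate). To verify it, use $\sinh(x/2)=\tfrac12\!\left(e^{x/2}-e^{-x/2}\right)$, so the right-hand side equals $\dfrac{x\,e^{x/2}}{e^{x/2}-e^{-x/2}}$; multiplying numerator and denominator by $e^{x/2}$ gives $\dfrac{x\,e^{x}}{e^{x}-1}$. The left-hand side, after multiplying numerator and denominator by $e^{x}$, is also $\dfrac{x\,e^{x}}{e^{x}-1}$, so the two agree.

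Taking the product over $i=1,\dots,n$ then gives $p^*Td(E) = p^*\!\left(e^{c_1(E)/2}\right)\cup p^*\hat{A}(E) = p^*\!\left(e^{c_1(E)/2}\hat{A}(E)\right)$ in $H^*(F(E);\mathbb{Q})$, and injectivity of $p^*$ yields $Td(E)=e^{c_1(E)/2}\hat{A}(E)$ in $H^*(X;\mathbb{Q})$.

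I do not expect any genuine obstacle: the content is the elementary power-series identity, and the only point needing a word of care is the passage from a factor-by-factor relation among symmetric functions of the $x_i$ to a relation among the corresponding characteristic classes — which is exactly what the splitting principle and the injectivity of $p^*$ provide. (Equivalently, one may argue directly with the formal symbols $x_i$, using the convention fixed in the Remark after the Corollary that symmetric functions of the Chern roots denote honest classes in $H^*(X;\mathbb{Q})$, and the flag bundle need not be mentioned at all.)
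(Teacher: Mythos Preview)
Your proof is correct and follows essentially the same approach as the paper: both invoke the splitting principle to write $c_1(E)=\sum_i x_i$ and then verify the identity factor by factor via the elementary computation $\dfrac{e^{x_i/2}\,x_i/2}{\sinh(x_i/2)}=\dfrac{x_i}{1-e^{-x_i}}$. The only difference is that you are more explicit about invoking injectivity of $p^*$ to descend back to $H^*(X;\mathbb{Q})$, whereas the paper works directly with the schematic convention for Chern roots set up in the Remark after the Corollary.
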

\begin{proof}
From the splitting principle, we have $c_1(E)=\sum_i x_i$. Then by definition
\begin{equation*}
\begin{split}
e^{c_1(E)/2}\hat{A}(E) &= e^{\sum_i x_i/2} \prod_i \frac{x_i/2}{\sinh(x_i/2)}\\
&= \prod_i \frac{e^{x_i/2}x_i/2}{\sinh(x_i/2)}\\
&= \prod_i \frac{e^{x_i/2}x_i}{e^{x_i/2}-e^{-x_i/2}}\\
&= \prod_i \frac{x_i}{1-e^{-x_i}}\\
&= Td(E)
\end{split}
\end{equation*}
\end{proof}

For real vector bundles, one can similarly define the Stiefel-Whitney class via the axiomatic approach:

\begin{definition}
Let $E$ be a real vector bundle over a topological space $X$. Then there exists a unique sequence of maps $w_i: Vect(X)\rightarrow H^{i}(X;\mathbb{Z}_2)$, such that\\
(1) $w_i(f^*(E))=f^*(w_iE)$ for all pullback $f^*(E)$.\\
(2) Let $w(E)=1+c_1+c_2+...\in H^*(X,\mathbb{Z}_2)$. Then $w(E\oplus F)=w(E)\cup w(F)$.\\
(3) $w_i(E)=0$ for all $i$ larger than the rank of $E$.\\
(4) Let $L$ be the canonical line bundle over $\mathbb{R}P^{1}$. Then $w_1(L)$ is the generator of $H^1(\mathbb{R}P^{1},\mathbb{Z}_2)$ specified in advance.\\
$w(E)$ is called the \emph{total Stiefel-Whitney class} of $E$, while $w_i(E)$ is called the \emph{$i$th Stiefel-Whitney class} of $E$.
\end{definition}

\begin{remark}
The above results and properties of Chern classes of complex vector bundles have similar versions for Stiefel-Whitney classes of real vector bundles.
\end{remark}

\begin{proposition}
A real vector bundle is orientable if and only if its first Stiefel-Whitney class $w_1=0$. A spin structure on a vector bundle $E$ exists if and only if $E$ is orientable and its second Stiefel-Whitney class $w_2=0$. A spin$^c$ structure on a vector bundle $E$ exists if and only if $E$ is orientable and the second Stiefel-Whitney class of $E$ is in the image of the natural map $H^2(X,\mathbb{Z})\rightarrow H^2(X,\mathbb{Z}_2)$.
\end{proposition}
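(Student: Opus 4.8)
The plan is to treat all three equivalences as instances of one obstruction-theoretic principle: orienting $E$, or equipping it with a spin or a spin$^c$ structure, amounts to lifting the structure group of its orthogonal frame bundle $P_{\mathrm{O}(n)}$ along one of the homomorphisms
$$\mathrm{SO}(n)\hookrightarrow\mathrm{O}(n),\qquad \mathbb{Z}_2\to\mathrm{Spin}(n)\to\mathrm{SO}(n),\qquad \mathrm{U}(1)\to\mathrm{Spin}^c(n)\to\mathrm{SO}(n),$$
and in each case the obstruction is read off from the transition functions in \v{C}ech cohomology. Fix a good open cover $\{U_\alpha\}$ of $X$ together with transition functions $g_{\alpha\beta}\colon U_\alpha\cap U_\beta\to\mathrm{O}(n)$ for $E$. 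The signs $\operatorname{sgn}\det g_{\alpha\beta}\in\{\pm1\}\cong\mathbb{Z}_2$ form a \v{C}ech $1$-cocycle, and I would identify its class in $H^1(X;\mathbb{Z}_2)$ with $w_1(E)$ by checking the four defining axioms: naturality under pullback is immediate; the Whitney-sum formula follows from $\det(g\oplus g')=\det g\cdot\det g'$; vanishing above the rank is clear; and the normalization on the canonical line bundle over $\mathbb{R}P^1$ is a direct computation, since there $\mathrm{O}(1)=\mathbb{Z}_2$ and the cocycle is literally the classifying map to $B\mathbb{Z}_2=\mathbb{R}P^\infty$. Because $E$ is orientable exactly when the $g_{\alpha\beta}$ can be adjusted by a $\mathbb{Z}_2$-valued $0$-cochain so as to take values in $\mathrm{SO}(n)$, orientability is equivalent to this $1$-cocycle being a coboundary, i.e.\ to $w_1(E)=0$.

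Assume now $w_1(E)=0$, so we may take $g_{\alpha\beta}\colon U_\alpha\cap U_\beta\to\mathrm{SO}(n)$. Over each contractible overlap choose a lift $\tilde g_{\alpha\beta}\colon U_\alpha\cap U_\beta\to\mathrm{Spin}(n)$ along the double cover; then $\tilde g_{\alpha\beta}\tilde g_{\beta\gamma}\tilde g_{\gamma\alpha}=\varepsilon_{\alpha\beta\gamma}\in\ker\bigl(\mathrm{Spin}(n)\to\mathrm{SO}(n)\bigr)=\mathbb{Z}_2$, and $(\varepsilon_{\alpha\beta\gamma})$ is a \v{C}ech $2$-cocycle whose class $[\varepsilon]\in H^2(X;\mathbb{Z}_2)$ is independent of all choices. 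A principal $\mathrm{Spin}(n)$-bundle double-covering $P_{\mathrm{SO}(n)}$ with the equivariance required in the definition exists if and only if the $\tilde g_{\alpha\beta}$ can be rechosen with $\varepsilon\equiv1$, that is, if and only if $[\varepsilon]=0$. One then identifies $[\varepsilon]$ with $w_2(E)$ by the splitting principle and naturality, reducing to sums of line bundles and the low-rank normalization exactly as for $w_1$. Hence $E$ admits a spin structure iff $w_1(E)=0$ and $w_2(E)=0$.

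For the spin$^c$ case I would run the same argument with $\mathrm{Spin}^c(n)\to\mathrm{SO}(n)$, whose kernel is $\mathrm{U}(1)$: local $\mathrm{Spin}^c(n)$-lifts of the $g_{\alpha\beta}$ produce a \v{C}ech $2$-cocycle $(z_{\alpha\beta\gamma})$ valued in the sheaf of continuous $\mathrm{U}(1)$-valued functions, and a spin$^c$ structure exists iff its class vanishes in $\check H^2\bigl(X;\underline{\mathrm{U}(1)}\bigr)\cong H^3(X;\mathbb{Z})$, the isomorphism coming from the exponential sheaf sequence $0\to\mathbb{Z}\to\underline{\mathbb{R}}\to\underline{\mathrm{U}(1)}\to0$ and the softness of $\underline{\mathbb{R}}$ on the paracompact space $X$. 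Choosing the lifts through the natural inclusion $\mathrm{Spin}(n)\hookrightarrow\mathrm{Spin}^c(n)$ exhibits this cocycle as the image of the $w_2$-cocycle under $\mathbb{Z}_2\hookrightarrow\mathrm{U}(1)$, so the obstruction equals $\beta\bigl(w_2(E)\bigr)$, where $\beta\colon H^2(X;\mathbb{Z}_2)\to H^3(X;\mathbb{Z})$ is the integral Bockstein of $0\to\mathbb{Z}\xrightarrow{\times2}\mathbb{Z}\to\mathbb{Z}_2\to0$ (the compatibility of this sequence with the exponential sequence identifies the two connecting maps). Finally, exactness of
$$H^2(X;\mathbb{Z})\xrightarrow{\ \rho\ }H^2(X;\mathbb{Z}_2)\xrightarrow{\ \beta\ }H^3(X;\mathbb{Z}),$$
with $\rho$ the reduction mod $2$ — the ``natural map'' of the statement — gives $\beta(w_2(E))=0$ iff $w_2(E)\in\operatorname{im}\rho$, completing the argument.

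The step I expect to be the real obstacle is the identification of the \v{C}ech obstruction $2$-cocycles with the axiomatically defined class $w_2$ (and, in the spin$^c$ case, with its integral Bockstein): since the axioms pin down the $w_i$ only implicitly, one must either invoke the classifying-space picture $H^i(X;\mathbb{Z}_2)\cong[X,K(\mathbb{Z}_2,i)]$ together with the known mod-$2$ cohomology of $B\mathrm{SO}(n)$, or push everything down to line bundles via the splitting principle and the Whitney-sum axiom. The remaining points — the non-abelian \v{C}ech manipulations and the isomorphism $\check H^2(X;\underline{\mathrm{U}(1)})\cong H^3(X;\mathbb{Z})$ — are routine but do require $X$ to be paracompact and of the homotopy type of a CW complex, which holds for the spacetime manifolds considered here.
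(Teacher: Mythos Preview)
The paper states this proposition without proof; it is quoted as a standard fact from the theory of characteristic classes (indeed the reference \cite{thom} in the paper is Lawson--Michelsohn, \emph{Spin Geometry}, where exactly this obstruction-theoretic argument is carried out). Your proposal is correct and is essentially the canonical proof: model $w_1$ and $w_2$ as the \v{C}ech obstructions to lifting transition functions along $\mathrm{SO}(n)\to\mathrm{O}(n)$ and $\mathrm{Spin}(n)\to\mathrm{SO}(n)$, and in the spin$^c$ case push the $\mathbb{Z}_2$-valued obstruction into $\underline{\mathrm{U}(1)}$ and use the exponential sequence together with the Bockstein long exact sequence to get the criterion $w_2\in\operatorname{im}\bigl(H^2(X;\mathbb{Z})\to H^2(X;\mathbb{Z}_2)\bigr)$.

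One small point worth tightening. In the spin$^c$ step you lift along $\mathrm{Spin}^c(n)\to\mathrm{SO}(n)$ with kernel $\mathrm{U}(1)$, whereas the paper's Definition~2.1.11 phrases a spin$^c$ structure as a lift of $P_{\mathrm{SO}(n)}\times P_{\mathrm{U}(1)}$ along the $\mathbb{Z}_2$-cover $\mathrm{Spin}^c(n)\to\mathrm{SO}(n)\times\mathrm{U}(1)$ for \emph{some} auxiliary $\mathrm{U}(1)$-bundle. These formulations are equivalent (the freedom to choose $P_{\mathrm{U}(1)}$ is exactly the freedom to kill the $\mathrm{U}(1)$-valued obstruction by an integral class), but since you are working from the paper's definitions it would be cleanest to say one sentence about why: a spin$^c$ structure in the paper's sense exists iff there is a class $c\in H^2(X;\mathbb{Z})$ with $\rho(c)=w_2(E)$, because the $\mathbb{Z}_2$-obstruction to lifting $P_{\mathrm{SO}(n)}\times P_{\mathrm{U}(1)}$ is $w_2(E)+\rho(c_1(P_{\mathrm{U}(1)}))$. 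Your own flag about identifying the \v{C}ech class with the axiomatic $w_2$ is apt; invoking the classifying-space description (or the splitting principle as you suggest) is the standard way to close that gap.
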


There is another type of characteristic class for real vector bundles:
\begin{definition}
Let $E\rightarrow X$ be a real vector bundle. Then the \emph{$i$th Pontryagin class} of $E$ is defined as
\begin{equation*}
p_i(E)=(-1)^i c_{2i}(E \otimes \mathbb{C})\in H^{4i}(X,\mathbb{Z})
\end{equation*}
where $E \otimes \mathbb{C}$ is the complexification of $E$.
\end{definition}
Notice that the Pontryagin classes are defined via the Chern classes of $E \otimes \mathbb{C}$, thus given by the Chern roots $x_i$ of $E \otimes \mathbb{C}$, i.e. $p_i$ is given by the elementary symmetric function of $x_i^2$. Then one can define the $\hat{A}$ genus of $E$ as for complex vector bundles
\begin{equation*}
\hat{A}(E)=\prod_i \frac{x_i/2}{\sinh(x_i/2)}\in H^*(X,\mathbb{Q})
\end{equation*}

\begin{remark}
Let $E\rightarrow X$ be a real vector bundle with a given orientation and a spin$^c$ structure. Then we see that $E$ is orientable, which means that its frame bundle $F(E)$ has two connected components. $F(E)$ is a fiber bundle over $X$ with projection $\pi$. Let $F^+(E)$ be the component that corresponds to the orientation of $E$. Notice the spin$^c$ structure on $E$ is given by a cohomology class $u\in H^2(F^+(E),\mathbb{Z})$ such that $i^*_x(u)\neq 0$ for all $x\in X$, where $i^*_x:H^2(F^+(E),\mathbb{Z})\rightarrow H^2(F^+_x(E),\mathbb{Z})\cong \mathbb{Z}_2$ is the pullback of the natural inclusion of fiber $i:F^+_x(E)\hookrightarrow F^+(E)$. Then one can define the \emph{first Chern class} of $E$ as $c_1(E)\in H^2(X,\mathbb{Z})$ given by $\pi^*(c_1(E))=2u$.
\end{remark}

\begin{definition}
The \emph{Todd class} of a spin$^c$ vector bundle $E\rightarrow X$ is defined as
\begin{equation*}
Td(E)=e^{c_1(E)/2}\hat{A}(E)
\end{equation*}
\end{definition}

\section{Topological $K$-theory}
Topological $K$-theory is a version of $K$-theory in algebraic topology, due to Michael Atiyah and Friedrich Hirzebruch \cite{atiya-hirzebruch}. Let $X$ be a compact Hausdorff space. Then the $K$-theory group of $X$ is an abelian group generated by stable isomorphism classes of vector bundles over $X$.

\begin{theorem}
There exists an abelian group $K^0(X)$ and a semigroup homomorphism $\phi: Vect(X)\rightarrow K^0(X)$, with the following universal property: for each abelian group $G$ and semigroup homomorphism $\psi: Vect(X)\rightarrow G$, there exists a unique group homomorphism $f: K^0(X)\rightarrow G$ such that the following diagram commutes
\begin{equation*}
\begindc{\commdiag}[50]
\obj(0,1)[01]{$Vect(X)$}
\obj(2,1)[21]{$K^0(X)$}
\obj(1,0)[10]{$G$}
\mor{01}{21}{$\phi$}
\mor{01}{10}{$\psi$}
\mor{21}{10}{$f$}
\enddc
\end{equation*}
$K^0(X)$ is unique up to isomorphisms that commute with $\phi$.
\end{theorem}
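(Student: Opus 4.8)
The plan is to realize $K^0(X)$ as the Grothendieck group of the abelian semigroup $(Vect(X),\oplus)$ — a construction that is purely formal and works for any abelian semigroup — and then to verify the stated universal property by hand. First I would build the group: consider pairs $(E,F)\in Vect(X)\times Vect(X)$, to be thought of as formal differences ``$[E]-[F]$'', and impose the relation $(E,F)\sim(E',F')$ iff there is a vector bundle $W\to X$ with $E\oplus F'\oplus W\cong E'\oplus F\oplus W$. (By Lemma \ref{complementary-bundle} one may take $W$ to be a trivial bundle $\varepsilon^n$; this makes the relation more concrete but is not logically needed.) Reflexivity and symmetry are immediate, and transitivity follows by direct-summing the two auxiliary bundles together. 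Let $K^0(X)$ be the set of equivalence classes with $[(E,F)]+[(E',F')]:=[(E\oplus E',\,F\oplus F')]$; one checks this is well defined, associative, commutative, has identity $[(E,E)]$ (for any $E$), and that $-[(E,F)]=[(F,E)]$, so $K^0(X)$ is an abelian group.

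Next I would set $\phi:Vect(X)\to K^0(X)$, $\phi(E)=[(E,0)]$; since $[(E,0)]+[(E',0)]=[(E\oplus E',0)]$, this is a semigroup homomorphism. For the universal property, given an abelian group $G$ and a semigroup homomorphism $\psi:Vect(X)\to G$, define $f:K^0(X)\to G$ by $f([(E,F)])=\psi(E)-\psi(F)$. The point that needs care is well-definedness: if $(E,F)\sim(E',F')$ via $E\oplus F'\oplus W\cong E'\oplus F\oplus W$, then applying $\psi$ — which respects $\oplus$ and is constant on isomorphism classes — gives $\psi(E)+\psi(F')+\psi(W)=\psi(E')+\psi(F)+\psi(W)$ in $G$, and cancelling $\psi(W)$ (valid in a group) yields $\psi(E)-\psi(F)=\psi(E')-\psi(F')$. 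Then $f$ is additive by construction, and $f(\phi(E))=f([(E,0)])=\psi(E)$, so the triangle commutes. Uniqueness of $f$ is forced because every element of $K^0(X)$ has the form $\phi(E)-\phi(F)$, and any group homomorphism extending $\psi$ along $\phi$ must send it to $\psi(E)-\psi(F)$.

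Finally, uniqueness of $K^0(X)$ up to $\phi$-compatible isomorphism is the standard argument: if $(K,\phi)$ and $(K',\phi')$ both have the property, the universal property produces homomorphisms $K\to K'$ and $K'\to K$ compatible with the structure maps, and their composites are endomorphisms of $K$ (resp.\ $K'$) commuting with $\phi$ (resp.\ $\phi'$), hence equal to the identity by the uniqueness clause applied with $G=K$ (resp.\ $G=K'$). The only genuinely delicate step in all of this is the well-definedness of $f$, and specifically the observation that the cancellation must be carried out in the group $G$ (where it is always legitimate) and \emph{not} in $Vect(X)$, which need not be cancellative; everything else is routine bookkeeping, and invoking Lemma \ref{complementary-bundle} to normalize $W$ to a trivial bundle only makes the checks more concrete without changing their substance.
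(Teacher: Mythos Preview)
Your argument is correct, but it follows a different construction than the paper's. The paper builds $K^0(X)$ as the quotient $F(X)/E(X)$, where $F(X)$ is the free abelian group on the set $Vect(X)$ and $E(X)$ is the subgroup generated by all elements $E+E'-(E\oplus E')$; the map $\phi$ is just the composite of inclusion into $F(X)$ with the quotient projection. With that presentation the universal property is essentially automatic: a semigroup map $\psi:Vect(X)\to G$ extends uniquely to a group homomorphism $F(X)\to G$ by freeness, and the semigroup condition $\psi(E)+\psi(E')=\psi(E\oplus E')$ says exactly that this extension kills $E(X)$, so it descends to $K^0(X)$. Your pair construction $(E,F)/\!\sim$ is the one the paper carries out \emph{afterwards} (as $\mathcal{K}(X)$) and then checks is isomorphic to $K^0(X)$ by invoking the universal property just established. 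So in effect you have merged the paper's two theorems into one, proving the universal property directly for the formal-difference model. The trade-off is that the free-group approach makes existence and the factorization of $f$ nearly tautological, while your approach is more concrete and immediately exhibits every class as a difference $\phi(E)-\phi(F)$, at the cost of having to verify by hand that $f$ is well defined on $\sim$-classes (which you do correctly, using cancellation in $G$).
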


\begin{proof}
To prove the existence, we will construct $K^0(X)$ explicitly. Define
\begin{itemize}
\item $F(X):=$  the free abelian group generated by elements of $Vect(X)$
\item $E(X) :=$ the subgroup of $F(X)$ generated by $\{E+E'-(E\oplus E')|E,E'\in Vect(X)\}$
\end{itemize}
Then we can construct $K^0(X):= F(X)/E(X)$. Accordingly, we define $\phi: Vect(X)\rightarrow K^0(X)$ as the projection $\phi(E)=[E]$.

To prove the universal property, note that $f$ is already uniquely defined by the explicit construction above, namely $f([E])=\psi(E)$. It is also well defined since $\psi$ is a homomorphism, i.e. $\psi(E)+\psi(E')=\psi(E\oplus E')$ for all $ E,E'\in Vect(X)$.
\end{proof}

\begin{definition}
The abelian group $K^0(X)$ is called the \emph{$K$-theory group} of the space $X$.
\end{definition}

Note that the elements of $K^0(X)$ are not just isomorphism classes of vector bundles over $X$. One can show that any two elements $[E],[F]\in K^0(X)$ are the same if and only if they are \emph{stably isomorphic}, i.e. there exists  a trivial bundle $\varepsilon^n\rightarrow X$ such that $E\oplus \varepsilon^n = F\oplus \varepsilon^n$. However, $K^0(X)$ is not a group of stable isomorphism classes of vector bundles under direct sum, because such a group would have no inverses. For example, $E\oplus F$ is stably isomorphic to $\varepsilon^0$ if and only if $E\oplus F\oplus \varepsilon^n\cong \varepsilon^n$ for some $n$, which means both $E$ and $F$ have rank $0$.

One can show that the elements of $K^0(X)$ are of the form $E-F$, a formal difference between ``a pair of bundles''. To see this, let's define $\Delta: Vect(X)\rightarrow Vect(X)\times Vect(X)$ to be the diagonal map, which is a semigroup homomorphism. Then let's denote the quotient as
\begin{equation*}
\mathcal{K}(X):= Vect(X)\times Vect(X)/\Delta(Vect(X))
\end{equation*}
which is in fact a group since one can induce inverses in it by interchanging factors in $Vect(X)\times Vect(X)$.

\begin{theorem}
$\mathcal{K}(X)\cong K^0(X)$.
\end{theorem}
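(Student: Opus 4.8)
The plan is to construct mutually inverse group homomorphisms between $\mathcal{K}(X)$ and $K^0(X)$, using the universal property of $K^0(X)$ established in the preceding theorem. First I would define a map $\psi: Vect(X) \to \mathcal{K}(X)$ by $\psi(E) = [(E,0)]$, i.e. the class of the pair $(E,0)$ in the quotient $Vect(X)\times Vect(X)/\Delta(Vect(X))$. This is a semigroup homomorphism because $(E\oplus E', 0)$ and $(E,0)+(E',0)=(E\oplus E',0)$ agree on the nose. Since $\mathcal{K}(X)$ is a group, the universal property yields a unique group homomorphism $f: K^0(X) \to \mathcal{K}(X)$ with $f([E]) = [(E,0)]$.

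Next I would build the inverse. Define $g: Vect(X)\times Vect(X) \to K^0(X)$ by $g(E,F) = [E] - [F]$; this is a semigroup homomorphism out of $Vect(X)\times Vect(X)$. The key point is that $g$ kills the diagonal: $g(E,E) = [E]-[E] = 0$, so $g$ descends to a well-defined group homomorphism $\bar g: \mathcal{K}(X) \to K^0(X)$ with $\bar g([(E,F)]) = [E]-[F]$. (One should note that $[(E,F)]$ really does have inverse $[(F,E)]$ in $\mathcal{K}(X)$, so $\bar g$ is a genuine homomorphism of groups, not just monoids.)

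Finally I would check $f$ and $\bar g$ are mutually inverse. On one side, $\bar g \circ f([E]) = \bar g([(E,0)]) = [E] - [0] = [E]$, and since elements of the form $[E]$ generate $K^0(X)$, this gives $\bar g \circ f = \mathrm{id}$. On the other side, $f \circ \bar g([(E,F)]) = f([E]-[F]) = f([E]) - f([F]) = [(E,0)] - [(F,0)] = [(E,0)] + [(0,F)] = [(E,F)]$, where the last equalities use that $f$ is a homomorphism and that $[(0,F)]$ is the inverse of $[(F,0)]$ in $\mathcal{K}(X)$; since classes $[(E,F)]$ generate $\mathcal{K}(X)$, this gives $f \circ \bar g = \mathrm{id}$.

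The argument is essentially formal once the right maps are written down, so there is no single hard computational step; the main thing to be careful about is the bookkeeping in $\mathcal{K}(X)$ — verifying that it is indeed a group with $[(F,E)]$ inverting $[(E,F)]$, and that both $f$ and $\bar g$ respect that group structure — together with the observation that each of $K^0(X)$ and $\mathcal{K}(X)$ is generated by the ``positive'' classes coming from $Vect(X)$, which is what lets the two composites be checked on generators. An alternative, slightly slicker route would be to apply the universal property a second time: equip $\mathcal{K}(X)$ with the semigroup map from $Vect(X)$ above, note it satisfies the same universal property, and conclude $\mathcal{K}(X)\cong K^0(X)$ by uniqueness — but spelling out the explicit isomorphism as above is more transparent.
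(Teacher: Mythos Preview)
Your proof is correct. The paper, however, takes precisely the ``slicker route'' you mention at the end: it shows directly that $\mathcal{K}(X)$, equipped with the map $\phi\colon E\mapsto [(E,\varepsilon^0)]$, satisfies the same universal property as $K^0(X)$. Concretely, given any abelian group $G$ and semigroup homomorphism $\psi\colon Vect(X)\to G$, the paper applies the Grothendieck-type construction to $G$ itself to form $\mathcal{K}(G)$, observes that $\psi$ induces $K_\psi\colon \mathcal{K}(X)\to\mathcal{K}(G)$, and then notes that because $G$ is already a group the natural map $\phi_G\colon G\to\mathcal{K}(G)$ is an isomorphism; the required factorization is $f=\phi_G^{-1}\circ K_\psi$. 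Uniqueness of the universal object then gives the isomorphism.

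The difference is purely one of packaging. Your explicit construction of $f$ and $\bar g$ has the virtue of producing the isomorphism on the nose (you see immediately that $[E]-[F]\leftrightarrow [(E,F)]$), which is useful in practice and is in fact what the paper exploits in the paragraph following the proof. The paper's argument is shorter and more functorial: it avoids checking the two composites by hand, at the cost of introducing the auxiliary object $\mathcal{K}(G)$ and the observation that the Grothendieck construction applied to a group returns that group. Both arguments are standard; neither is deeper than the other.
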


\begin{proof}
Let's define $\phi: Vect(X)\rightarrow \mathcal{K}(X)$ as $\phi(E)= [(E,\varepsilon^0)]$. Let $G$ be any abelian group, with a semigroup homomorphism $\psi: Vect(X)\rightarrow G$. Let $\mathcal{K}(G)$ denote the quotient of $G\times G$ via the diagonal map, with the corresponding group homomorphism $\phi_G$ analogous to $\phi$. Then the induced group homomorphism $K_{\phi}: \mathcal{K}(X)\rightarrow \mathcal{K}(G)$ makes the following diagram commute:
\begin{equation*}
\begin{CD}
Vect(X) @>\psi>> G\\
@VV\phi V @VV\phi_G V\\
\mathcal{K}(X) @>K_{\phi}>> \mathcal{K}(G)
\end{CD}
\end{equation*}
Note $\phi_G$ is in fact an isomorphism because $G$ is an abelian group. Hence $\mathcal{K}(G)$ satisfies the universal property in Theorem 2.3.1, with $f=\phi_G^{-1}\circ K_{\phi}$. The result then follows from the uniqueness.
\end{proof}

Formally, we can write $[(E,F)]\in \mathcal{K}(X)$ as $E-F$. Then $\mathcal{K}(X)$ is an abelian group under the direct sum. Therefore, the identity in $K^0(X)$ is of the form $E-E$ for any $E$, while the inverse of $E-F$ is simply $F-E$. Then $K^0(X)$ can be alternatively defined as the group of elements of the form $E-F$, such that $E-F=(E\oplus G)-(F\oplus G)$ for any vector bundle $G\rightarrow X$. That is to say, $E-F=E'-F'$ in $K^0(X)$ if and only if $E\oplus F'$ is stably isomorphic to $E'\oplus F$. This means every element of $K^0(X)$ can be put in the form $E-\varepsilon^n$.

\begin{remark}
Recall that one can define pullback bundles, which provide a relation between vector bundles over different base spaces $X$ and $Y$. It is straightforward to see that, for $f:X\rightarrow Y$, pullback induces a group homomorphism $f^*:K^0(Y)\rightarrow K^0(X)$. This way, $K$-theory becomes a contravariant functor from the category of topological spaces to the category of abelian groups.

In fact one can do more. The tensor product operation on vector bundles provides a natural product operation on $K^0(X)$ as follows: for any two elements $E_1-E_2,F_1-F_2\in K^0(X)$, one can define
\begin{equation*}
(E_1-E_2)(F_1-F_2)=E_1\otimes F_1-E_1\otimes F_2-E_2\otimes F_1+E_2\otimes F_2
\end{equation*}
which makes $K^0(X)$ into a commutative ring with identity $\varepsilon^1$. Then $K$-theory is in fact a contravariant functor from the category of topological spaces to the category of commutative rings.
\end{remark}

\begin{example}
Let $X=$\{point\}. Then every vector bundle over $X$ is trivial, hence characterized by its rank. It then follows that $K^0($point$)\cong \mathbb{Z}$.
\end{example}

\begin{definition}
Specify a point $x_0\in X$ by the inclusion map $i:x_0\rightarrow X$. Then we define the \emph{reduced $K$-theory group} of $X$ as
\begin{equation*}
\tilde{K}^0(X)= ker\left(i^*: K^0(X)\rightarrow K^0(x_0)\cong \mathbb{Z}\right).
\end{equation*}
\end{definition}

An element in the reduced $K$-theory group $\tilde{K}^0(X)$ is of the form $E-F\in K^0(X)$ where $E$ and $F$ have the same rank. Equivalently, it can be written as $E-\varepsilon^n$ where $E$ has rank $n$. Note in $\tilde{K}^0(X)$, we have $E-\varepsilon^n=F-\varepsilon^m$ if and only if $[E\oplus\varepsilon^m]=[F\oplus\varepsilon^n]$ in $K^0(X)$, which is true if and only if there exists a $ k\in \mathbb{Z}^+$ such that $E\oplus\varepsilon^m\oplus\varepsilon^k=F\oplus\varepsilon^n\oplus\varepsilon^k$ in $Vect(X)$, i.e. $E\oplus\varepsilon^{m+k}=F\oplus\varepsilon^{n+k}$. Two vector bundles $E$ and $F$ with this property are called \emph{stably equivalent} to each other. Therefore, $\tilde{K}^0(X)$ consists of the stable equivalent classes of vector bundles over $X$. Clearly $\tilde{K}^0(X)$ is an ideal of $K^0(X)$, with the following short exact sequence
\begin{equation*}
0\rightarrow \tilde{K}^0(X)\rightarrow K^0(X)\rightarrow \mathbb{Z}\rightarrow 0
\end{equation*}
which also splits.

For any CW complex $X$ with $dim X\leq 2n$, one can prove
\begin{equation*}
\tilde{K}^0(X)\simeq [X, G(n,2n)]
\end{equation*}
where $[X, G(n,2n)]$ is the set of homotopy classes of maps from $X$ to the Grassmannian $G(n,2n)$, and $\simeq$ is a bijective relation between sets. Therefore, one can view $K$-theory as a generalized cohomology theory. As such, we can define the following:

\begin{definition}
Let $Y$ be a closed subspace of $X$. Then we define the \emph{relative $K$-theory}
\begin{equation*}
K^0(X,Y):= \tilde{K}^0(X/Y)
\end{equation*}
\end{definition}

\begin{definition}
Let $X$ be a locally compact Hausdorff space. The \emph{$K$-theory with compact support} of $X$ is
\begin{equation*}
K^0_c(X):= K^0(X^+, x_0)
\end{equation*}
where $X^+$ is the one point compactification of $X$, with the added point being $x_0$.
\end{definition}

An element of $K^0(X,Y)$ is of the form $[E,F;t]$, where $E$ and $F$ are vector bundles over $X$, with the property that $t: E|_Y\rightarrow F|_Y$ is an isomorphism. Later we will see that $t$ is naturally related to the tachyon condensation of D-branes.

Since $K$-theory is a generalized cohomology theory, one would expect there are higher degree $K$-theory groups $K^n(X)$, as suggested by our notation $K^0(X)$. Indeed, in the case of complex vector bundles, there are essentially two different $K$-theory groups, namely $K^{n}(X)=K^0(X)$ if $n$ is even, while $K^{n}(X)=K^1(X)$ if $n$ is odd.\footnote{In the case of real vector bundles, there are eight $K$-theory groups, denoted by $KO^n(X)$, $n=0,1,...,7$.} This phenomenon is called \emph{Bott periodicity} \cite{bott-peorid}.  Therefore, we can define the \emph{total $K$-theory group} of $X$ as the graded ring
\begin{equation*}
K^*(X):= K^0(X)\oplus K^1(X)
\end{equation*}
which will often be called the $K$-theory group of $X$, too.

\begin{example}
If $X$ is a point, then $K^1(X)=0$. Thus $K^*(X)= K^0(X)\cong \mathbb{Z}$.
\end{example}

\begin{remark}
Recall that the Chern character of vector bundles over $X$ is the map from $Vect(X)$ to $H^*(X,\mathbb{Q})$. In fact, it is a natural transformation between two contravariant functors: $K^*(-)$ and $H^*(-,\mathbb{Q})$. Explicitly, it provides a map $ch^{\bullet}: K^0(X)\rightarrow H^*(X,\mathbb{Q})$, via $ch^{\bullet}(E-F)=ch^{\bullet}(E)-ch^{\bullet}(F)$, which is in fact a ring homomorphism. We will call it the \emph{cohomological Chern character}. Later we will see that it is related to the charges of D-branes.

Note that $ch^{\bullet}$ maps $K^0(X)$ to $ H^{even}(X,\mathbb{Q})$, and maps $K^1(X)$ to $ H^{odd}(X,\mathbb{Q})$, where
\begin{equation*}
\begin{split}
& H^{even}(X,\mathbb{Q})=\bigoplus_{n\geq 0} H^{2n}(X,\mathbb{Q})\\
& H^{odd}(X,\mathbb{Q})=\bigoplus_{n\geq 0} H^{2n+1}(X,\mathbb{Q}).
\end{split}
\end{equation*}
\end{remark}

\section{D-brane Charges}
There is not an exact definition of D-branes, as the exact formulation of string theory itself is still unknown. However, we do know that D-branes are certain dynamical extended objects in string theory that open strings can ``end on''. In many cases, we can model a D-brane as a subspace of spacetime (called the \emph{worldvolume} of this D-brane), with a vector bundle over it (called the \emph{gauge bundle}). We say that, in this case, a D-brane \emph{wraps} the subspace. This interpretation will suffice for our purpose in this chapter in discussing the relationship between D-branes and $K$-theory.

More accurately, one basic property of D-branes is that they are charged under the Ramond-Ramond fields from the quantum spectrum of open strings. Originally, D-brane charges were thought to be cohomology classes of spacetime, via some anomaly computation \cite{harvey-moore-khomology, freed-witten-anomaly}. In \cite{witten-ktheory}, Witten pointed out that the \emph{charges of various D-brane configurations should be interpreted as elements in the $K$-theory group of spacetime}; the previous cohomology classes that were regarded as D-brane charges are actually images of these $K$-theory elements under the ``modified Chern homomorphism'' \cite{harvey-moore-khomology}.

We will mostly focus on D-branes from type IIA and type IIB string theories. In type IIA string theory, D-branes have odd dimensional worldvolumes, while type IIB D-branes have even dimensional worldvolumes. The gauge bundles in both cases are complex vector bundles with structure group $U(n)$, where $n$ is the rank of the vector bundle, which is also the number of D-branes that wrap this subspace. Traditionally, people denote a D-brane with $p+1$ dimensional worldvolume as a Dp-brane.

The relationship between these D-branes and $K$-theory is most directly established via the Thom isomorphism in $K$-theory, so let's take some time to discuss this theorem (we will be mostly following \cite{thom}). Let $\pi: E\rightarrow M$ be a real or complex vector bundle, where $M$ is a compact space. Then one can prove that $K^*_c(E)$ is in fact a $K^*(M)$-module.

\begin{definition}
Let $u\in K^0_c(E)$. If $K^*_c(E)$ is a free $K^*(M)$-module generated by $u$, then $u$ is called a \emph{$K$-theory orientation} of $E$.
\end{definition}

In discussing D-branes, we will focus on spin$^c$ vector bundles, although the argument can be easily applied to other vector bundles. So let's take $\pi: E\rightarrow M$ to be an oriented spin$^c$ vector bundle of rank $2n$, with the associated irreducible spinor bundle denoted as $S(E)=S^+(E)\oplus S^-(E)$.

\begin{proposition}
(\textbf{Thom isomorphism}) Let $\pi: E\rightarrow M$ be an oriented spin$^c$ vector bundle of rank $2n$. There is a natural $K$-theory orientation of $E$, namely
\begin{equation*}
s(E)=[\pi^*S^+(E),\pi^*S^-(E);f]\in K^0_c(E)
\end{equation*}
where $f$ is defined by the Clifford multiplication, namely $f_e(\alpha)=e\cdot \alpha$, for all $e\in E$. Then the associated map
\begin{equation} \label{thom}
\begin{split}
i_!:K^0(M)&\rightarrow K^0_c(E)\\
a&\mapsto (\pi^*a)\cdot s(E)
\end{split}
\end{equation}
is an isomorphism.
\end{proposition}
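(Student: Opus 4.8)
The plan is to reduce the general statement to two cases that are easy to handle, namely the case when $E$ is a trivial bundle over $M$ and the case when $M$ is a point, and then to glue these together using a Mayer--Vietoris argument over a finite good cover of $M$. First I would check that the class $s(E)=[\pi^*S^+(E),\pi^*S^-(E);f]$ is well defined as an element of $K^0_c(E)$: the Clifford multiplication map $f_e(\alpha)=e\cdot\alpha$ is an isomorphism $S^+(E)_x\to S^-(E)_x$ whenever $e\neq 0$ (since $e\cdot e\cdot\alpha = -|e|^2\alpha$ after choosing a metric), so $f$ is an isomorphism away from the zero section, hence off a cocompact set in each fibre; this is exactly the condition needed for the triple to define a compactly supported $K$-theory class on the total space. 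Then I would verify that $a\mapsto(\pi^*a)\cdot s(E)$ is a $K^*(M)$-module homomorphism, which is immediate from the definition of the module structure on $K^*_c(E)$ and the multiplicativity of the relative product.

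Next I would establish the base case $M=\mathrm{pt}$. Here $E=\mathbb{R}^{2n}$ with its standard spin$^c$ structure, $K^0_c(\mathbb{R}^{2n})=\tilde K^0(S^{2n})$, and one must show this is a free rank-one $\mathbb{Z}$-module generated by $s(E)$. This is the computational heart of Bott periodicity: the generator of $\tilde K^0(S^2)$ is $[H]-1$ where $H$ is the hyperplane bundle on $\mathbb{C}P^1=S^2$, and the spinor construction on $\mathbb{R}^2=\mathbb{C}$ recovers exactly this class; the general even-dimensional case follows by the multiplicativity $s(E_1\oplus E_2)=s(E_1)\cdot s(E_2)$ together with $K^0_c(\mathbb{R}^{2n})\cong K^0_c(\mathbb{R}^2)^{\otimes n}$. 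I would quote Bott periodicity (cited in the excerpt) for the fact that $\tilde K^0(S^{2n})\cong\mathbb{Z}$ and do the identification of generators explicitly in dimension two.

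The inductive step proceeds over a finite cover of the compact base $M$. For $U\subseteq M$ open with $E|_U$ trivial, naturality of $i_!$ under restriction and the base case give that $i_!$ is an isomorphism over $U$; for a disjoint union the statement is additive. The multiplicativity $s(E_1\oplus E_2)=s(E_1)\cdot s(E_2)$ (checked fibrewise from the isomorphism $S(E_1\oplus E_2)\cong S(E_1)\hat\otimes S(E_2)$ of Clifford modules) lets me compare the Thom class of $E|_{U\cap V}$ built from the two trivializations. Then a Mayer--Vietoris sequence in compactly supported $K$-theory, compatible via $i_!$ with the Mayer--Vietoris sequence for $K^*(M)$, plus the five lemma, upgrades "isomorphism over $U$ and over $V$" to "isomorphism over $U\cup V$"; induction on the number of sets in a trivializing cover finishes the proof.

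The main obstacle is the base case: proving that $s(\mathbb{R}^{2n})$ actually generates $K^0_c(\mathbb{R}^{2n})$ as a free module is precisely a form of Bott periodicity, and doing it honestly requires either the external-product periodicity theorem or an explicit analysis of clutching functions on $S^{2n}$. I would not reprove Bott periodicity here; instead I would invoke it and confine the real work to identifying the spinor Thom class with the standard Bott generator in complex dimension one, from which everything else follows formally by multiplicativity and Mayer--Vietoris. A secondary technical point is checking naturality of the whole package (the module structure, the product, and the Thom class) under pullback and restriction carefully enough that the Mayer--Vietoris comparison diagram genuinely commutes.
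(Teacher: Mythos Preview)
The paper does not actually prove this proposition: it is stated without proof, with the reference \cite{thom} (Lawson--Michelsohn, \emph{Spin Geometry}) serving as the source. So there is no in-paper argument to compare your proposal against.

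That said, your outline is the standard and correct strategy, and it is essentially the one found in Lawson--Michelsohn and in Atiyah's treatment: verify that Clifford multiplication gives an isomorphism off the zero section so that $s(E)$ is a well-defined compactly supported class; establish the base case $M=\mathrm{pt}$, $E=\mathbb{R}^{2n}$ by identifying $s(\mathbb{R}^{2n})$ with the Bott generator of $\tilde K^0(S^{2n})$ (this is exactly where Bott periodicity enters, and you are right not to reprove it here); check multiplicativity $s(E_1\oplus E_2)=s(E_1)\cdot s(E_2)$ from the tensor decomposition of spinors; and then run a Mayer--Vietoris induction over a finite trivializing cover using the five lemma. Your identification of the two genuine technical points---the base case being equivalent to Bott periodicity, and the need to check naturality carefully so the comparison diagrams commute---is accurate. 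Nothing in your plan is wrong or missing; it simply goes well beyond what the paper itself supplies.
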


Let's now return to D-branes. Let $X$ be the ten dimensional spacetime of string theory, which is a locally compact spin manifold. For simplicity and clarity, let's first focus on D-branes in type IIB string theory. These are submanifolds of $X$ with dimensions 0, 2, 4, 6, 8, and 10. Let's also restrict to the case where the field strength 3-form $H$ of the Neveu-Schwarz 2-form field $B$ is cohomologically trivial\footnote{The case with nontrivial $B$ will involves more complicated versions of $K$-theory, such as the twisted $K$-theory, which will not be discussed here.}. Consider k D$p$-branes wrapping a $p+1$ dimensional compact submanifold M of $X$ with embedding $i:M\hookrightarrow X$. Let $N\rightarrow M$ denote the normal bundle of $M$, defined by the short exact sequence of vector bundles
\begin{equation*}
0\rightarrow TM\rightarrow TX|_M\rightarrow N\rightarrow 0
\end{equation*}
where $TM$ is the tangent bundle of $M$, and $TX|_M = i^*TX$ is the pullback (or restriction) of the tangent bundle of $X$ to $M$. Note that $N$ can be identified with a tubular neighbourhood of $M$ in $X$. Let's assume that $N$ is actually a spin$^{c}$ vector bundle, with the physical justification being explained momentarily.

Recall that there is a gauge bundle $V\rightarrow M$ on $M$, which is a vector bundle with rank $k$ and defines an element $V\in K^0(M)$. Then one can obtain $(V\otimes S^+(N), V\otimes S^-(N)) \in K^0(M)$ as the first step of applying Thom isomorphism to $N$, where $S(N)=S^+(N)\oplus S^-(N)$ is the spinor bundle associated with $N$. Note that the Thom isomorphism gives $K^0(M)\cong K^0_c(N)$. Then together with the excision homomorphism $j_*:K^0_c(N)\rightarrow K^0_c(X)$ induced by the natural inclusion $j: N\hookrightarrow X$, we obtain a homomorphism of rings
\begin{equation} \label{abs}
\Psi = j_*\circ i_!: K^0(M)\rightarrow K^0(X)
\end{equation}
Hence, our D$p$-brane wrapping on $M$ determines an element in $K^0(X)$. Similarly, we see that every IIB D$p$-brane is represented by a K-theory class on the entire spacetime.

We can work out the explicit form of $\Psi(V)$ as follows. First, Thom isomorphism gives
\begin{equation*}
i_!(V)=[\pi^*(V\otimes S^+(N)),\pi^*(V\otimes S^-(N));f]
\end{equation*}
where $f$ is defined by the Clifford multiplication.  Recall Lemma 2.1.6 says that, for any vector bundle over a compact Hausdorff space, there exists a complementary vector bundle on the same space such that the direct sum of these two vector bundles is isomorphic to a trivial bundle. Let's apply this lemma to $\pi^*(V\otimes S^+(N))$, i.e. there exists a vector bundle $W\rightarrow N$ such that $\pi^*(V\otimes S^+(N))\oplus W \cong \varepsilon^n$ for some positive integer $n$. This way, we obtain an element in $K^0_c(N)$ of the form $[\varepsilon^n, \pi^*(V\otimes S^-(N))\oplus W; f\oplus id_W]$. Next, we extend the trivial bundle $\varepsilon^n$ to the entire $X$, which we denote as $E$. Then we can obtain an element $(E, F)\in K^0_c(X)$ by demanding $F$ is isomorphic to $E$ on $X-M$, and $F|_M = V\otimes S^-(N)\oplus W$. Note $E$ and $F$ have the same rank, which means if $X$ is compact, then what we actually obtain is an element in $\tilde{K}^0(X)$.

\begin{remark}
This $K$-theoretic construction has a very natural physical interpretation. First of all, anomalies prevent D-branes from wrapping submanifolds whose normal bundles are not spin$^c$ \cite{witten-anomaly,freed-witten-anomaly}, justifying our assumption. The vector bundle $E$ is associated to $n$ anti-D9-branes wrapping the spacetime $X$. Similarly, $F$ represents $n$ D9-branes wrapping $X$. The fact that $E$ and $F$ are isomorphic on $X-M$ is physically realized as the tachyon condensation, through which the D9-branes and anti-D9-branes annihilate each other on $X-M$, leaving $k$ D$p$-branes wrapping $M$ \cite{witten-ktheory}. These physical processes are most explicitly captured by the $K$-theoretic description of D-brane charges as above, which is one of the major reasons of using $K$-theory, rather than cohomology, to classify D-brane charges.

There is another major advantage of this $K$-theory construction. The mathematical fact that there are only two $K$-theory groups is reflected in the physical fact that there  are two different type II string theories: type IIA and type IIB. Physically, type IIA string theory is related to type IIB string theory via T-duality. Therefore, the IIA branes ought to be described by some $K$-theoretic quantities. It turns out that they are classified by $K^1(X)$ \cite{witten-ktheory}. Therefore, the physical fact that there are only two kinds of type II string theory can be seen from the mathematical Bott periodicity \cite{witten-ktheory}. Hence, all D-branes in type II string theory are classified by the total $K$-theory group $K^*(X)$.\footnote{D-branes charges in type I string theory are classified by the real $K$-theory, $KO^*(X)$ \cite{witten-ktheory}.}
\end{remark}

Although this $K$-theoretic classification of D-branes charges is very powerful, there are certain drawbacks. First of all, it is hard to do the reverse engineering, i.e. given an abstract element in $K^*(X)$ representing the charge of a D-brane configuration, it is difficult to obtain a direct geometric picture of D-branes wrapping some submanifold. It would be more powerful to have a classification of D-branes that captures the geometric information of D-branes directly. Second, the $K$-theoretic interpretation treats pairs of D9-anti-D9 branes quite differently from those lower dimensional D-branes.

In the next chapter, we will study $K$-homology, the homological dual of $K$-theory. We will see that topological $K$-homology provides nice solutions to the above issues and subtleties, with further implications for the physics of D-branes.

\pagebreak

\chapter{$K$-homology and D-branes}

\section{Topological $K$-homology}
$K$-homology is the homological dual of $K$-theory. The topological version of $K$-homology, dual to the topological $K$-theory discussed in the last chapter, was formulated by Baum and Douglas \cite{baum-douglas}. As observed there, the isomorphism between topological $K$-homology and analytical $K$-homology provides a unified framework for proving various index theorems. For us, topological $K$-homology gives a direct geometric way of classifying D-branes.

The topological $K$-homology group of a topological space contains the $K$-theoretic version of cycles, which are defined as the following
\begin{definition}
A \emph{$K$-cycle} for a topological space $X$ is a triple $[M,E,\phi]$, where
\begin{itemize}
\item $M$ is a compact spin$^c$ manifold without boundary,
\item $E\stackrel{\pi}{\rightarrow} M$ is a complex vector bundle,
\item $\phi:M\rightarrow X$ is a continuous map.
\end{itemize}
\end{definition}

\begin{definition}
Two $K$-cycles $[M,E,\phi]$ and $[M',E',\phi']$ are called \emph{isomorphic}, denoted $[M,E,\phi]\cong [M',E',\phi']$, if there exists a diffeomorphism $h: M\rightarrow M'$ such that
\begin{itemize}
\item $h$ preserves the spin$^c$ structure,
\item The pullback bundle $h^*(E')\cong E$, and
\item The following diagram commutes
\begin{equation*}
\begindc{\commdiag}[50]
\obj(0,1)[01]{$M$}
\obj(2,1)[21]{$M'$}
\obj(1,0)[10]{$X$}
\mor{01}{21}{$h$}
\mor{01}{10}{$\phi$}
\mor{21}{10}{$\phi'$}
\enddc
\end{equation*}
\end{itemize}
\end{definition}

The set of all isomorphism classes of $K$-cycles on $X$ is denoted as $\Gamma(X)$. Note that disjoint union is a well defined operation on $\Gamma(X)$.

\begin{definition}
Given a $K$-cycle $[M,E,\phi]$ for a space $X$, together with a real smooth spin$^c$ vector bundle $f:H\rightarrow M$ of even rank, we can define the \emph{clutching construction} as the following procedure:
\begin{itemize}
\item Let $L=M\times \mathbb{R}$ be the trivial real line bundle. Then define a sphere bundle
      \begin{equation*}
      \rho:\hat{M}:= S(H\oplus L)\rightarrow M
      \end{equation*}
      i.e. the unit sphere bundle of $H\oplus L$.
\item Let $n$ be the rank of $H$. Because $n$ is even, we have the associated irreducible spinor bundle $S=S^+\oplus S^-$. Then we define complex vector bundles $S^0=f^*S^+$ and $S^1=f^*S^-$ over $H$. Note that the paring $H\times S^+\rightarrow S^-$, defined by the Clifford multiplication, induces a vector bundle map
    \begin{equation*}
    \sigma:S^0\rightarrow S^1
    \end{equation*}
    which is an isomorphism off the zero section of $H$.
\item Let $B(H)$ be the unit ball bundle of $H$. Then one can write
    \begin{equation*}
    \hat{M}=B(H)\cup_{S(H)}B(H)
    \end{equation*}
    where the two copies of $B(H)$ are glued together by $id_{S(H)}$ on their boundaries. Note the complex vector bundles $S^0$ and $S^1$ over $H$ induce complex vector bundles $S^0$ and $S^1$, each of which is over the one copy of $B(H)$. Then we define a complex vector bundle
    \begin{equation*}
    \hat{H}:= S^0\cup_{\sigma} S^1 \rightarrow \hat{M}
    \end{equation*}
    where $S^0$ and $S^1$ are attached together by $\sigma$ on $S(H)$.
\end{itemize}
The clutching construction generates another $K$-cycle $[\hat{M},\hat{H}\otimes \rho^*(E),\phi\circ\rho]$ of $X$.
\end{definition}

\begin{definition}
The \emph{topological $K$-homology group} of a topological space $X$ is defined to be $K^t_*(X):=\Gamma(X)/\sim$, where $\sim$ is the equivalence relation generated by the following three equivalences:
\begin{itemize}
\item \emph{Bordism}: $[M_1,E_1,\phi_1]\sim [M_2,E_2,\phi_2]$ if there is a compact spin$^c$ manifold $W$ with boundary, a complex vector bundle $E\rightarrow W$, and a continuous map $\phi:W\rightarrow X$, such that
    \begin{equation*}
    [\partial W,E|_{\partial W},\phi|_{\partial W}]\cong [M_1,E_1,\phi_1]\amalg[-M_2,E_2,\phi_2]
    \end{equation*}
    where $-M_2$ is $M_2$ with the reversed spin$^c$ structure.
\item \emph{Direct sum}: $[M,E_1,\phi]\amalg[M,E_2,\phi]\sim[M,E_1\oplus E_2,\phi]$.
\item \emph{Vector bundle modification}: Given a $K$-cycle $[M,E,\phi]$, together with a real smooth spin$^c$ vector bundle $f:H\rightarrow M$ of even rank, we have
     \begin{equation*}
     [M,E,\phi]\sim [\hat{M},\hat{H}\otimes \rho^*(E),\phi\circ\rho]
     \end{equation*}
     where $\hat{M},\hat{H}$ and  $\rho$ are defined by the clutching construction above.
\end{itemize}
\end{definition}

\begin{remark}
$K^t_*(X)$ is an abelian group, with the group operation given by the disjoint union. Note that $\sim$ preserves the parity of the dimension of $M$. Therefore, $K^t_*(X)$ in fact has grading
\begin{equation*}
K^t_*(X)=K^t_0(X)\oplus K^t_1(X)
\end{equation*}
where $K^t_0(X)$ and $K^t_1(X)$ are subgroups given by all $[M,E,\phi]$ with each connected component of $M$ even or odd dimensional, respectively. Unlike the topological $K$-theory groups $K^0(X)$ and $K^1(X)$, these two topological $K$-homology groups are very much like each other, in the sense that their elements can be directly described in a unified way.
\end{remark}

\begin{proposition}
Let $f: X\rightarrow Y$ be a continuous map. Then there exists a homomorphism $f_*:K^t_*(X)\rightarrow K^t_*(Y)$, given by $f_*[M,E,\phi]=[M,E,f\circ\phi]$.
\end{proposition}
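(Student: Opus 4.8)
The plan is to verify that the assignment $[M,E,\phi]\mapsto[M,E,f\circ\phi]$ is well defined at every stage of the construction of $K^t_*(X)$, and then to observe that the resulting map is automatically a group homomorphism. First I would check that the triple $[M,E,f\circ\phi]$ is genuinely a $K$-cycle for $Y$: the manifold $M$ and the bundle $E\to M$ are untouched, so $M$ is still a compact spin$^c$ manifold without boundary and $E$ is still a complex vector bundle; only the target of the map changes, and $f\circ\phi:M\to Y$ is continuous as a composition of continuous maps. Next I would show the induced map $\Gamma(X)\to\Gamma(Y)$ is well defined on isomorphism classes: if $h:M\to M'$ witnesses $[M,E,\phi]\cong[M',E',\phi']$, then $h$ preserves the spin$^c$ structure, $h^*(E')\cong E$, and $\phi'\circ h=\phi$; composing the last identity with $f$ gives $(f\circ\phi')\circ h=f\circ\phi$, so the \emph{same} diffeomorphism $h$ witnesses $[M,E,f\circ\phi]\cong[M',E',f\circ\phi']$. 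Since disjoint union affects neither $E$ nor the spin$^c$ structure and satisfies $(f\circ\phi_1)\amalg(f\circ\phi_2)=f\circ(\phi_1\amalg\phi_2)$, the map $\Gamma(X)\to\Gamma(Y)$ is a homomorphism of abelian semigroups.

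The heart of the argument is descending through the equivalence relation $\sim$, i.e. showing each of the three generating equivalences is sent to an instance of the corresponding equivalence over $Y$. For bordism, if $W$ is a compact spin$^c$ manifold with boundary, $E\to W$ a complex vector bundle, and $\Phi:W\to X$ continuous exhibiting $[M_1,E_1,\phi_1]\sim[M_2,E_2,\phi_2]$, then $(W,E,f\circ\Phi)$ exhibits the analogous bordism over $Y$, since $(f\circ\Phi)|_{\partial W}=f\circ(\Phi|_{\partial W})$ and one applies the isomorphism-invariance established above to the boundary identification. For the direct-sum relation, $f_*\bigl([M,E_1,\phi]\amalg[M,E_2,\phi]\bigr)=[M,E_1,f\circ\phi]\amalg[M,E_2,f\circ\phi]\sim[M,E_1\oplus E_2,f\circ\phi]=f_*[M,E_1\oplus E_2,\phi]$, using the direct-sum relation over $Y$. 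For vector bundle modification, the key observation is that the clutching construction producing $\hat M$, $\hat H$, and $\rho:\hat M\to M$ depends only on the data $M$, $E$, and $H\to M$, and not at all on the target space $X$ or the map $\phi$; the modified cycle is $[\hat M,\hat H\otimes\rho^*(E),\phi\circ\rho]$, and applying $f_*$ simply replaces $\phi\circ\rho$ by $(f\circ\phi)\circ\rho=f\circ(\phi\circ\rho)$. Hence $f_*$ carries a vector bundle modification of $[M,E,\phi]$ to the vector bundle modification of $[M,E,f\circ\phi]=f_*[M,E,\phi]$ along the same $H\to M$. Therefore $f_*$ descends to a well-defined map $K^t_*(X)\to K^t_*(Y)$.

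Finally, since the group operation on $K^t_*$ is induced by disjoint union and $f_*$ respects disjoint union (and preserves the parity of $\dim M$, hence the grading), $f_*$ is a homomorphism of graded abelian groups, which completes the proof. I do not expect a genuine obstacle here: the proposition is a verification rather than a theorem with a trick, and the only point that requires a moment's thought is the independence of the clutching construction from the target space, which makes the vector bundle modification case go through. (One could additionally record functoriality, $(g\circ f)_*=g_*\circ f_*$ and $(\mathrm{id}_X)_*=\mathrm{id}$, by the same bookkeeping, so that $K^t_*$ becomes a covariant functor, though the statement as given asks only for the existence of $f_*$.)
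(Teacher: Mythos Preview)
Your proposal is correct; the paper states this proposition without proof, and your verification---checking that post-composition with $f$ preserves $K$-cycles, isomorphism classes, disjoint union, and each of the three generating equivalences---is exactly the routine argument one would supply. There is nothing to compare: you have simply filled in the details the paper omits.
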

Note this makes $K^t_*(-)$ into a covariant functor from the category of topological spaces to the category of abelian groups.

\begin{definition}
The \emph{homological Chern character} $ch_{\bullet}$ is a natural transformation between two covariant functors
\begin{equation*}
ch_{\bullet}:K^t_*(-)\rightarrow H_*(-,\mathbb{Q})
\end{equation*}
Explicitly, let $[M,E,\phi]$ be a $K$-cycle on a topological space $X$. Let $\phi_*:H_*(M,\mathbb{Q})\rightarrow H_*(X,\mathbb{Q})$ be the pushforward map induced by $\phi$. Then
\begin{equation} \label{homology-chern}
ch_{\bullet}[M,E,\phi]=\phi_*\big(\left(ch^{\bullet}(E)\cup Td(M)\right)\cap[M]\big)
\end{equation}
where $\left(ch^{\bullet}(E)\cup Td(M)\right)\cap[M]$ is the Poincar\'{e} dual of $ch^{\bullet}(E)\cup Td(M)$.
\end{definition}

\section{Analytic $K$-homology}
There is another presentation of $K$-homology called analytic $K$-homology, which is the operator theory version of $K$-homology. One can use Dirac operators on spin$^c$ manifolds to construct an isomorphism between topological $K$-homology and analytic $K$-homology. This isomorphism provides a unified framework for various index theorems.

Before discussing analytic $K$-homology, we will first need some concepts from operator theory. Let $H_0,H_1$ be two separable Hilbert spaces.

\begin{definition}
A bounded operator $T:H_0\rightarrow H_1$ is called \emph{Fredholm} if $T$ has finite dimensional kernel and cokernel. Then we say the \emph{index} of $T$ is
\begin{equation}
Index(T):= dim \ ker(T)- dim\ coker(T)
\end{equation}
\end{definition}

Let $H$ be a separable Hilbert space. We will use the following notation:
\begin{itemize}
\item $L(H)=C^*$-algebra of all bounded operators on $H$.
\item $K(H)=$ closed ideal in $L(H)$ of all compact operators on $H$.
\item $Q(H)=L(H)/K(H)$.
\end{itemize}

Given a topological space $X$, let $C(X)$ denote the $C^*$-algebra of all continuous complex-valued functions on $X$. Let's now consider the set $S(X)$ of 5-tuples $(H_0,\psi_0,H_1,\psi_1,T)$, where
\begin{itemize}
\item $H_0,H_1$ are separable Hilbert spaces.
\item $\psi_i: C(X)\rightarrow L(H_i)$, $i=0,1$ is an unital algebra *-homomorphism.
\item $T:H_0\rightarrow H_1$ is a bounded Fredholm operator, with the property that $T\psi_0(f)-\psi_1(f)T$ is compact for all $f\in C(X)$.
\end{itemize}

\begin{definition}
We define some relations on the set $S(X)$:
\begin{itemize}
\item \emph{Isomorphism}: $(H_0,\psi_0,H_1,\psi_1,T)\cong (H_0',\psi_0',H_1',\psi_1',T')$ if there exist unitary operators $U_0:H_0\rightarrow H_0'$ and $U_1:H_1\rightarrow H_1'$, such that $T'=U_1TU_0^*$ and $\psi_i'(f)=U_i\psi_i(f)U_i^*$ for all $f\in C(X)$.
\item \emph{Triviality}: An element of $S(X)$ is called trivial if it is of the form $(H,\psi,H,\psi,id_H)$.
\item \emph{Direct sum}:
\begin{equation*}
(H_0,\psi_0,H_1,\psi_1,T)\oplus(H_0',\psi_0',H_1',\psi_1',T')=(H_0\oplus H_0',\psi_0\oplus \psi_0',H_1\oplus H_1',\psi_1\oplus \psi_1',T\oplus T')
\end{equation*}
\end{itemize}
\end{definition}

\begin{definition}
The \emph{analytic $K$-homology group} of degree 0 of $X$ is $K^a_0(X):= S(X)/\sim$, where $\sim$ is the equivalence relation generated by the following equivalence relations:
\begin{itemize}
\item Isomorphic objects are equivalent.
\item $(H_0,\psi_0,H_1,\psi_1,T)\sim(H_0\oplus H,\psi_0\oplus \psi,H_1\oplus H,\psi_1\oplus \psi,T\oplus id_H)$.
\item Given two objects of the form $(H,\psi,H,\psi,T)$ and $(H',\psi',H',\psi',T')$, let $V,V'$ be the partial isometry parts in the polar decompositions of $T,T'$. Then $(H,\psi,H,\psi,T)\sim(H',\psi',H',\psi',T')$ if $V-V'$ is compact, and $\psi(f)-\psi'(f)$ is compact for all $f\in C(X)$.
\end{itemize}
\end{definition}
$K^a_0(X)$ is an abelian group under direct sum. The inverse of $(H_0,\psi_0,H_1,\psi_1,T)$ is $(H_1,\psi_1,H_0,\psi_0,T^*)$.

\begin{example}
Consider the n-dimensional sphere $S^n$. Then $K^a_0(S^n)=\mathbb{Z}$ if $n$ is odd, while $K^a_0(S^n)=\mathbb{Z}\oplus\mathbb{Z}$ if $n$ is even.
\end{example}

\begin{remark}
Notice we can define a homomorphism
\begin{equation*}
\begin{split}
&Index: K^a_0(X)\rightarrow \mathbb{Z}\\
\text{by}\ & Index(H_0,\psi_0,H_1,\psi_1,T)=Index(T)
\end{split}
\end{equation*}
which clearly is a surjective map.
\end{remark}

Analogously, the degree 1 $K$-homology group of $X$ is obtained from the set $S'(X)$ of pairs of the form $(H,\tau)$, where $H$ is a separable Hilbert space, and $\tau:C(X)\rightarrow Q(H)$ is an unital algebra *-homomorphism.

\begin{definition}
We define some relations on the set $S'(X)$:
\begin{itemize}
\item \emph{Unitary equivalence}: $(H_0,\tau_0)\cong (H_1,\tau_1)$ are unitarily equivalent if there exists an unitary operator $U:H_0\rightarrow H_1$ such that the following diagram is commutative:
\begin{equation*}
\begindc{\commdiag}[50]
\obj(0,1)[01]{$C(X)$}
\obj(2,1)[21]{$Q(H_0)$}
\obj(1,0)[10]{$Q(H_1)$}
\mor{01}{21}{$\tau_0$}
\mor{01}{10}{$\tau_1$}
\mor{21}{10}{$\tilde{U}$}
\enddc
\end{equation*}
\item \emph{Triviality}: An element $(H,\tau)$ is trivial if there exists a unital algebra *-homomorphism $\eta:C(X)\rightarrow L(H)$ such that the following diagram is commutative:
\begin{equation*}
\begindc{\commdiag}[50]
\obj(0,1)[01]{$C(X)$}
\obj(2,1)[21]{$L(H)$}
\obj(1,0)[10]{$Q(H)$}
\mor{01}{21}{$\eta$}
\mor{01}{10}{$\tau$}
\mor{21}{10}{$\pi$}
\enddc
\end{equation*}
where $\pi$ is the natural projection.
\item \emph{Direct sum}:
\begin{equation*}
(H_0,\tau_0)\oplus(H_1,\tau_1)=(H_0\oplus H_1,\tau_0\oplus \tau_1)
\end{equation*}
\end{itemize}
\end{definition}

\begin{definition}
The \emph{degree 1 analytic $K$-homology group} of a topological space $X$ is $K^a_1(X):= S'(X)/\sim$, where $\sim$ is the equivalence relation given by: $(H_0,\tau_0)\sim(H_1,\tau_1)$ if there exit trivial elements $(H_0',\tau_0')$ and $(H_1',\tau_1')$ such that $(H_0,\tau_0)\oplus(H_0',\tau_0')$ is unitarily equivalent to $(H_1,\tau_1)\oplus(H_1',\tau_1')$
\end{definition}
$K^a_1(X)$ is an abelian group under direct sum. Thus we can define the total analytic $K$-homology group as
\begin{equation*}
K^a_*(X)=K^a_0(X)\oplus K^a_1(X)
\end{equation*}
which is a $\mathbb{Z}_2$ graded abelian group under direct sum.

\begin{remark}
Let $X$ be a closed spin$^c$ smooth manifold. Let $E_0,E_1$ be two complex vector bundles over $X$. Let $C(E_i)$ be the vector space of smooth sections of $E_i$, $i=0,1$. One can then construct Hilbert spaces $L^2(E_i)$ from $C(E_i)$ using any Hermitian metric on $E_i$. Consider an elliptic degree zero pseudo-differential operator $D:C(E_0)\rightarrow C(E_1)$, which can be viewed as an operator $D:L^2(E_0)\rightarrow L^2(E_1)$. Then one can show that every element of $K^a_0(X)$ can be obtained from some $D$ of this type \cite{baum-douglas}. Therefore, one can write the element of $K^a_0(X)$ corresponding to a given operator $D$ as $[D]$. Similarly, one can show that all the elements of $K^a_1(X)$ can be obtained using self-adjoint elliptic operators from $C(E)$ to itself, where $E$ is a complex vector bundle on $X$.
\end{remark}

In the next section, we will use an elliptic degree zero pseudo-differential operator constructed from the Dirac operator, which we define now. Let $X$ be a Riemannian manifold with a spin structure. Let $S$ be the spinor bundle with respect to this spin structure. Naturally, the Riemannian structure on $X$ leads to a Riemannian structure on $S$, which in turn determines a covariant derivative $\nabla$ on $S$.
\begin{definition}
The \emph{Dirac operator} on $X$ is defined to be a first order differential operator $D:\Gamma(S)\rightarrow\Gamma(S)$, given by
\begin{equation*}
D\sigma := \sum_{i=1}^n e_i\cdot \nabla_{e_i}\sigma
\end{equation*}
at $x\in X$ for any $\sigma\in \Gamma(S)$, where $e_i$, $i=1,2,...,n$ is an orthonormal basis of $T_x(X)$, and $\cdot$ denotes the Clifford multiplication.
\end{definition}

\section{D-branes Again}

Let's now consider D-branes in type II string theory. Let $X$ denote the ten-dimensional spacetime manifold of string theory. The formulation of the topological $K$-homology group $K^t_*(X)$ using $K$-cycles is clearly geometric in nature. Since the $K$-theory group $K^*(X)$ classifies the charges of D-branes in type II string theory, it is natural to ask what is the relationship between the $K$-homology group $K^t_*(X)$ and type II D-branes. In this section, we will argue that the elements, represented by $K$-cycles, in the \emph{$K$-homology group $K^t_*(X)$ classify D-branes themselves}.

First, let us start with a $K$-cycle $[M,E,\phi]$, where $\phi:M\rightarrow X$ is an embedding. Notice that $M$ is a spin$^c$ manifold by definition. This coincides with the known fact from section 2.3, that D-branes can only wrap submanifolds with spin$^c$ structures, following from two independent arguments using $K$-theory and anomaly computation. In addition, we naturally have a vector bundle $E$ over $M$. Based on these observations, we propose that this $K$-cycle represents a D-brane configuration that wraps $M$, with gauge bundle $E$.

What can we say about the equivalence relations used to define the $K$-homology group from $K$-cycles? Recall that in type IIB string theory, the physical process of tachyon condensation can be mathematically interpreted as the equivalence relation used to define the $K$-theory group of spacetime. More precisely, the configuration of D9-branes and anti-D9-branes wrapping the entire spacetime, represented by a $K$-theory element $(V,W)\in K^0(X)$, can physically reduce to a lower dimensional D-brane configuration on a submanifold $M$ (with gauge bundle $E$ over $M$) via tachyon condensation. We will now introduce a more direct $K$-homological interpretation of tachyon condensation, in the following steps:
\begin{itemize}
\item Let's start with a type IIB D-brane configuration represented by a $K$-cycle $[M,E,\phi]$, where $\phi:M\rightarrow X$ is an embedding. In section 2.4 we explained that we can obtain an element $(\mathcal{E},\mathcal{F})\in K^0_c(N)$, where $N$ is a tubular open neighborhood of $M$ in $X$, identified with the normal bundle of $M$ in $X$, $\mathcal{E}=\pi^*(E\otimes S^+(N))$, and $\mathcal{F}=\pi^*(E\otimes S^-(N))$.

\item Recall that we can extend $(\mathcal{E},\mathcal{F})$ to get an element $(V,W)\in K^0(X)$, which represents the charge of this D-brane configuration. Here $V = \mathcal{E}\oplus G$ is a trivial bundle over $X$, while $W|_{N} = \mathcal{F}\oplus G$, with an isomorphism $f: V\rightarrow W$ on $X-\phi(M)$.

\item Let's define $X_1=X_2=X\times [0,1/2]$. We put opposite orientations on these two copies. Note we can choose to extend $V$ over $X_1$, and $W$ over $X_2$. Then we glue $X_1$ and $X_2$ together, via $(x,1/2)\sim (x,1/2)$ for all $x\in X-N$. The two vector bundles $V\rightarrow X_1$ and $W\rightarrow X_2$ are in turn glued together as well using the above isomorphism $f$.

\item Let $\hat{N}$ denote the double of $N$, obtained via the above glueing process on the boundary of $N$. We end up with a bordism equivalence between two $K$-cycles on X, namely:
     \begin{equation*}
     [X\amalg X, V\amalg W, id_X]\sim [\hat{N}, (\mathcal{E}\oplus G)\cup_{f\oplus id_G}(\mathcal{F}\oplus G), i\cup i]
     \end{equation*}
     where $(\mathcal{E}\oplus G)\cup_{f\oplus id_G}(\mathcal{F}\oplus G)$ is made from $V$ on one copy of $N$ and $W$ on the other copy of $N$ glued together by $f$ on the boundary of $N$. The map $i\cup i:\hat{N}\rightarrow X$ arises from the inclusion $i:N\hookrightarrow X$.

\item Note under the direct sum equivalence, we have
      \begin{equation*}
      [\hat{N}, (\mathcal{E}\oplus G)\cup_{f\oplus id_G}(\mathcal{F}\oplus G), i\cup i] \sim [\hat{N}, \mathcal{E}\cup_{f}\mathcal{F}, i\cup i]\amalg[\hat{N},  G\cup_{id_G} G, i\cup i]
      \end{equation*}

\item Observe that this map $i\cup i:\hat{N}\rightarrow X$ is homotopic to the map $g:\hat{N}\rightarrow X$ defined by $g = \phi\circ p$, where $p:\hat{N}\rightarrow M$ is the natural projection. Therefore, we have an equivalences
     \begin{equation*}
     \begin{split}
     [\hat{N}, \mathcal{E}\cup_f\mathcal{F}, i\cup i] &\sim [\hat{N}, \mathcal{E}\cup_f\mathcal{F}, g]\\
     [\hat{N},  G\cup_{id_G} G, i\cup i] &\sim [\hat{N},  G\cup_{id_G} G, g]
     \end{split}
     \end{equation*}

\item Let $B$ be the unit ball bundle of $N\oplus \varepsilon$ where $\varepsilon = M\times \mathbb{R}$. Let $q: N\oplus \varepsilon\rightarrow N$ be the projection onto the first summand. Let's denote the restriction of $q$ to $B$ as $q_B$. Also note that the map $g$ extends to a map $g_B:B\rightarrow X$. Therefore, we have a bordism equivalence
    \begin{equation*}
    [\hat{N},  G\cup_{id_G} G, g]\sim 0
    \end{equation*}
    given by $[B, q_B^*G, g_B]$. Therefore, in $K^t_0(X)$ we have
    \begin{equation*}
    [\hat{N}, (\mathcal{E}\oplus G)\cup_{f\oplus id_G}(\mathcal{F}\oplus G), i\cup i] \sim [\hat{N}, \mathcal{E}\cup_{f}\mathcal{F}, g]
    \end{equation*}

\item The above construction defines a vector bundle modification equivalence between two $K$-cycles:
     \begin{equation*}
     [\hat{N}, \mathcal{E}\cup_f\mathcal{F}, g]\sim [M,E,\phi]
     \end{equation*}
     On the other hand, note that under the direct sum equivalence, we have
     \begin{equation*}
     [X\amalg X, V\amalg W, id_X]\sim [X, V- W, id_X]
     \end{equation*}
     where we have $V- W$ because we put opposite orientations on the two copies of $X$. Finally, based on the above equivalences we obtain
     \begin{equation*}
     [M,E,\phi]\sim [X, V- W, id_X].
     \end{equation*}
\end{itemize}
Therefore, our D-brane configuration, represented by the $K$-cycle $[M,E,\phi]$, is $K$-homologically equivalent to the $K$-cycle $[X, V- W, id_X]$, representing the D9-branes and anti-D9-branes that reduce to our D-brane configuration on $\phi(M)$ via tachyon condensation. Here $V$ and $W$ are precisely the gauge bundles on the D9-branes and anti-D9-branes. In another words, the physical process of tachyon condensation can be mathematically interpreted as the equivalence relation that defines the $K$-homology group of spacetime, analogous to the $K$-theory case.

\begin{remark}
In the above construction, we made the identification of the normal bundle $N$ of $M$ as the tubular neighborhood of $M$, and glued two copies of $N$ together on their boundaries to get $\hat{N}$. There is a subtlety, however, in the application of the Thom isomorphism to $N$. Recall the Thom isomorphism involves the Clifford multiplication on $N$, which is a vector bundle. As one moves towards the infinity of the fibers, which are vector spaces, the norm of Clifford multiplication goes to infinity. Therefore, when we applied various constructions on the boundary of the tubular neighborhood, we were actually identifying the tubular neighborhood as the \emph{unit ball bundle of $N$}. Of course $N$ can be identified with its unit ball bundle, and, under this identification, the $K$-theory class defined by Clifford multiplication on the vector bundle agrees with the $K$-theory class defined by Clifford multiplication on the ball bundle, so our construction is well defined.
\end{remark}

Based on the above evidence, we now propose the following: \\
\\
\fbox{\parbox{\textwidth}{
 Let $[M,E,\phi]$ be an arbitrary $K$-cycle of the spacetime $X$. We propose that $[M,E,\phi]$ represents a D-brane configuration wrapping $\phi(M)\subseteq X$.}}
\\

Before we proceed to the main argument, there are some relevant concepts and constructions that need to be introduced first. We will mostly follow the discussion in \cite{atiyah}.

Let $A$ denote the space of Fredholm operators on a Hilbert space $H$. Then $\text{Index}:A\rightarrow \mathbb{Z}$ is a continuous map. Consider a topological space $X$ with a continuous map $T:X\rightarrow A$. This data generates a family of Fredholm operators $T_x$ parameterized by $x\in X$. Suppose that the dimension of ker$(T)_x$ is independent of $x$ for any $x\in X$. Then one can construct two vector bundles over $X$: ker$(T)$ and coker$(T)$.

\begin{proposition}
Let $[X,A]$ denote the set of homotopy classes of maps from $X$ to $A$. The map index$: [X,A]\rightarrow K^0(X)$, defined by index$(T)=(\text{ker}(T),\text{coker}(T))$, is bijective.
\end{proposition}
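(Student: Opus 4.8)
The plan is to show that the map $\mathrm{index}\colon [X,A]\to K^0(X)$ is both surjective and injective, after first dispensing with the hypothesis that $\dim\ker(T_x)$ be constant. Indeed, the statement as phrased is really the classical theorem of Atiyah--J\"anich that $A$ is a classifying space for $K^0$; the ``constant kernel dimension'' assumption cannot hold for a general homotopy class (the kernel dimension can jump), so the first step is to explain the standard device that repairs this. Given any continuous $T\colon X\to A$, one finds a finite-dimensional subspace $V\subseteq H$ whose image under $T_x^{*}$ (equivalently, a cofinite subspace of $H_1$) is transverse to $\mathrm{range}(T_x)$ for every $x$ (using compactness of $X$ and openness of the Fredholm condition), and then replaces $T_x$ by the surjective operator $\tilde T_x = T_x \oplus s\colon H_0\oplus V \to H_1$, for an inclusion $s\colon V\hookrightarrow H_1$. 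Now $\ker(\tilde T_x)$ has locally, hence globally, constant rank and forms a genuine vector bundle, and one \emph{defines} $\mathrm{index}(T) := [\ker\tilde T] - [V] \in K^0(X)$. The routine check that this is independent of the choice of $V$ (two choices $V,V'$ are compared via $V\oplus V'$) and constant on homotopy classes comes first.

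Next I would prove surjectivity. Given a class in $K^0(X)$, write it (using Lemma~\ref{complementary-bundle}, i.e.\ the fact that every bundle over a compact Hausdorff space has a complement to a trivial bundle) in the form $[F] - [\varepsilon^n]$ for some bundle $F$ with $F\oplus G \cong \varepsilon^m$. Realize $F$ as a subbundle of a trivial bundle $\varepsilon^m = X\times \mathbb{C}^m$, let $H = \ell^2 \otimes \mathbb{C}^m$ with $H_0 = H$, $H_1 = H$, and build $T_x$ to be (a suitable shift composed with) the fiberwise orthogonal projection away from $F_x$, arranged so that $\ker(T_x) = F_x$ and $\mathrm{coker}(T_x)$ is a fixed trivial bundle of the right rank. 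Then $\mathrm{index}(T)$ is exactly the prescribed class. The only care needed is bookkeeping with ranks so that the formal difference lands on the nose; this is mechanical.

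The injectivity step is the main obstacle and deserves the most attention. Suppose $T^0, T^1\colon X\to A$ have $\mathrm{index}(T^0) = \mathrm{index}(T^1)$ in $K^0(X)$; I must produce a homotopy between them through $A$. After stabilizing both by the transversality construction above, one may assume both $T^i_x$ are surjective with $\ker(T^i)$ honest bundles, and the $K$-theory equality says $[\ker T^0]\oplus[\varepsilon^k] \cong [\ker T^1]\oplus[\varepsilon^k]$ for some $k$ (stable isomorphism); absorbing trivial summands into the Hilbert space, one may assume $\ker T^0 \cong \ker T^1$ as bundles. The heart of the matter is then a Kuiper-type argument: the space of surjective Fredholm operators $H_0 \to H_1$ with a \emph{fixed} kernel bundle $W\subseteq H_0$, equivalently the space of sections of a bundle with contractible fibers (the fibers being spaces of invertible operators on complements, which are contractible by Kuiper's theorem that $GL(H)$ is contractible), is itself contractible. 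Concretely: restrict each $T^i$ to an isomorphism from the orthogonal complement of $\ker T^i$ onto $H_1$, transport along the chosen bundle isomorphism $\ker T^0 \cong \ker T^1$, and connect the two resulting sections of the bundle of invertible operators by a straight-line (or Kuiper) homotopy, keeping kernels fixed throughout; finally undo all the stabilizations. The delicate points are that Kuiper's theorem is what makes the relevant fiber contractible, and that one must keep track of the parametrized (over $X$) nature of everything so that the pointwise homotopies assemble into a genuine homotopy $X\times[0,1]\to A$. Once injectivity and surjectivity are in hand, the proposition follows.
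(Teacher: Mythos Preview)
The paper does not actually prove this proposition: it is stated without proof as a classical result, with the discussion attributed to Atiyah's \emph{Global Theory of Elliptic Operators} and the follow-up remark citing Atiyah's \emph{$K$-theory} for the stabilization device that handles non-constant kernel dimension. So there is no ``paper's own proof'' to compare against; the paper is simply quoting the Atiyah--J\"anich theorem.

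Your proposal is a reasonable outline of the standard Atiyah--J\"anich argument and is correct in its essentials. The three pieces---the stabilization trick to make $\ker T$ a genuine bundle and thereby define the index map on all of $[X,A]$, the explicit construction of a Fredholm family with prescribed index bundle for surjectivity, and the reduction of injectivity to Kuiper's theorem on the contractibility of $GL(H)$---are exactly the ingredients one finds in Atiyah's book or in J\"anich's original paper. The only places where your sketch is thin are the expected ones: in the surjectivity step the phrase ``arranged so that $\ker(T_x)=F_x$ and $\mathrm{coker}(T_x)$ is a fixed trivial bundle of the right rank'' hides a small explicit construction (typically one uses a shift operator on $\ell^2$ composed with the fiberwise projection), and in the injectivity step the passage from ``$\ker T^0\cong\ker T^1$ as bundles'' to an actual homotopy requires a bit more care than a single invocation of Kuiper (one also uses that the unitary group of $H$ is contractible to align the complements of the kernels inside the fixed ambient Hilbert space). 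But these are refinements, not gaps, and your plan would go through if executed carefully.
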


\begin{remark}
Note that the dimension of ker$(T)$ may not be a constant on $X$. However, this will not raise any serious issue. Briefly, for any continuous map from $X$ to $A$, one can construct an element of $K^0(X)$ without the assumption that the dimension of ker$(T)$ is independent of $x\in X$ \cite{atiyah-ktheory}.
\end{remark}

Let $X$ and $Y$ be two smooth manifolds. We will construct a group homomorphism from $K^0(X\times Y)$ to $K^0(X)$. Let $E$ be a vector bundle over $Y$. Then from Lemma \ref{complementary-bundle} we see there exists another vector bundle $F$ on $Y$ such that $E\oplus F\cong Y\times \mathbb{C}^n$. Equivalently, there is a continuous map $T:Y\rightarrow \text{Proj}(\mathbb{C}^n)$ such that $F$ is isomorphic to the vector bundle of kernels of $T$, where Proj$(\mathbb{C}^n)$ is the space of projections of $\mathbb{C}^n$.

Denote the standard basis of $\mathbb{C}^n$ as $e_1,...,e_n$. Then one can write the map $T$ explicitly as
\begin{equation*}
T(y)e_i=\sum_j T^{ij}(y)e_j
\end{equation*}
for any $y\in Y$. Notice the $T^{ij}$ are elements of $C(Y)$, the space of continuous functions on $Y$. Let $H_1$ and $H_2$ be Hilbert space modules for $C(Y)$. Let $H_a^n=H_a\otimes \mathbb{C}^n$, $a=1,2$. Then we can define bounded operators $T_a$ on $H_a^n$ by
\begin{equation*}
T_a(\alpha\otimes e_i)=\sum_j T^{ij}\alpha\otimes e_j
\end{equation*}
for any $\alpha\in H_a$, where we used the fact that $H_a$ are $C(Y)$-modules.

Let $P:H_1\rightarrow H_2$ be an elliptic operator on $Y$. We define a new operator $Q=T_2\tilde{P}T_1$, where $\tilde{P}:H_1^n\rightarrow H_2^n$ is simply $P\otimes id_{\mathbb{C}^n}$. One can prove that $Q$ is in fact an elliptic operator on $Y$ \cite{atiyah}. Performing this construction at each point $x\in X$, one obtains a family of elliptic operators $Q(x)$. Then the above proposition shows there is a homomorphism
\begin{equation} \label{x*y-x}
\Pi:K^0(X\times Y)\rightarrow K^0(X)
\end{equation}

Let's now get back to discussing D-branes in type II string theory. Recall that physically one needs two pieces of data to define a D-brane configuration: a subspace of $X$ that the D-brane configuration wraps, and a $K$-theory element representing the charge of this D-brane configuration. In order to give a possible physical meaning to an arbitrary $K$-cycle, let's construct a map from $K$-cycles to the $K$-theory group, which classifies D-brane charges.

Consider the type IIB case first. Let $[M,V,\phi]$ be any $K$-cycle. Because $M$ is a smooth manifold, it can be embedded into $\mathbb{R}^n$ for some positive integer $n$. The one-point compactification of $\mathbb{R}^n$ is the $n$-dimensional sphere $S^n$. Therefore there exists an embedding $i:M\hookrightarrow S^n$. Hence one can construct an embedding $f:M\hookrightarrow X\times S^n$, by $f(p)=(\phi(p),i(p))$ for any $p\in M$. We choose $n$ such that $M$ has even codimension under this embedding, which means $n$ must be an even number.

Notice that the normal bundle of $M$ in $X\times S^n$ is a spin$^c$ vector bundle because $M$ is a spin$^c$ manifold. Therefore we can apply the homomorphism (\ref{abs}) to this normal bundle to obtain an element $(\mathcal{E},\mathcal{F})\in K^0(X\times S^n)$, using the vector bundle $V$ on $M$. Let $D$ be the Dirac operator on $S^n$. One can obtain an elliptic order zero pseudo-differential operator from $D$, which will simply be denoted as $D$. From $(\mathcal{E},\mathcal{F})$ and $D$ one obtains an element $(E,F)\in K^0(X)$, by the map (\ref{x*y-x}). By construction, $E$ and $F$ are isomorphic to each other outside an arbitrarily small neighborhood of $\phi(M)$. Therefore, the $K$-cycle $[M,V,\phi]$ represents a IIB D-brane configuration wrapping $\phi(M)$, with charge given by the $K$-theory element $(E,F)\in K^0(X)$.

Analogously, in the type IIA case one can find a map between the $K$-cycles that define $K^t_1(X)$ and the elements in $K^1(X)$, by choosing $n$ so that $M$ has even codimension in $X\times S^n$. In this case, $n$ must be odd, so the Dirac operator on $S^n$ is a self-adjoint operator. One can then use $KK$-theory \cite{blackadar} to argue that this data leads to elements in $K^1(X)$.

\begin{remark}
At the level of $K$-homology, one can prove the $K$-theoretic version of Poincar\'{e} duality, which, in our case, states that $K^*(X)\cong K^t_*(X)$ \cite{reis-szabo-khomology}. Therefore, the $K$-theory group and the $K$-homology group contain the same information as abstract groups. However, they have different (in fact dual) formulations. It is the $K$-cycle formulation of $K$-homology that contains direct geometric information about D-branes.
\end{remark}

\begin{remark}
In general, for a particular $K$-cycle $[M,E,\phi]$ we don't need $\phi(M)$ to be a smooth manifold, because $\phi$ is only required to be continuous. Therefore, our mathematical construction suggests that it is meaningful to talk about D-branes wrapping arbitrary subspaces (the images $\phi(M)$ from various $K$-cycles) of spacetime, even if the subspaces are not smooth manifolds.

One advantage of our $K$-homological interpretation of D-branes is that it is very much like the standard way of representing fundamental strings. Usually, fundamental strings are characterized, very roughly, by maps from Riemann surfaces (with additional structures) into spacetime. Clearly, this coincides with our interpretation of D-branes as $K$-cycles. It is then intriguing to look for a sensible physical theory that describes the dynamics of D-branes using this idea.

Another advantage of our $K$-homology interpretation of D-branes is that the manifold $M$ in a $K$-cycle $[M,E,\phi]$ is required to be spin$^c$ by definition. If $\phi$ is an embedding, this nicely coincides with the physical requirement that D-branes can only wrap on submanifolds with spin$^c$ structures.

Finally, in this way we put D9-branes (anti-D9-branes) and D-branes with lower dimensions on equal footing. For example, a 9-brane (or an anti-9-brane) is given by a $K$-cycle $[X,E,\phi]$ (or $[X,F,\phi']$) where $\phi$ (or $\phi'$)$:X\rightarrow X$ is a diffeomorphism.
\end{remark}

We have restricted ourselves to the case of type II D-branes. Analogously, one can define topological $K$-homology groups dual to the $K$-theory groups defined using real vector bundles. These groups characterize type I D-branes by a similar argument. Hence, all D-branes in string theory can be properly characterized by suitable topological $K$-homology groups.

\pagebreak

\chapter{Summary and Discussion}

The close relationship and interplay between string theory and pure mathematics has flourished over the last several decades, once again carrying on the old tradition of mutually beneficial development of physics and mathematics. On one hand, the application of pure mathematics to string theory has provided string theorists with numerous powerful tools for computation and model building, as well as fundamental insights about the basic nature of string theory itself. On the other hand, the advances in string theory have generated many deep insights about pure mathematics, as well as many new ways of proving hard results.

In this thesis, we reviewed the interpretation of the RR charges of D-branes in string theory as elements of the topological $K$-theory group of spacetime. The fact that D-branes carry vector bundles on them leads us to $K$-cocycles, while the physically significant process of tachyon condensation leads us to the $K$-theory equivalence relations between $K$-cocycles. This is one of the leading examples of the interplay discussed above.

Furthermore, we proposed a dual interpretation of D-brane charges. More precisely, we proposed a direct characterization of D-branes themselves as elements of the topological $K$-homology group of spacetime. In this approach, D-branes are shown to be characterized by $K$-cycles of spacetime, and, as happened for the equivalence relation defining $K$-theory, the physical tachyon condensation once again lead us to the $K$-homological equivalence relations between $K$-cycles.

One major advantage of using $K$-homology to classify D-branes is that one can directly see the geometry of D-branes from $K$-cycles. For example, given a $K$-cycle $[M,E,\phi]$ which represents a D-brane configuration, one can directly see the D-brane worldvolume, namely $\phi(M)$. If $\phi$ is an embedding in addition, then the gauge bundle of this D-brane configuration is precisely $E$.

Another advantage of $K$-homology is that the tachyon condensation process is more transparent. We have proven that a $K$-cycle $[M,E,\phi]$, where $\phi$ is an embedding, is $K$-homologically equivalent to a $K$-cycle of the form $[X,V- W, id_X]$, where $V$ and $W$ are the gauge bundles on some D9-branes and anti-D9-branes wrapping the entire spacetime $X$. This is precisely the tachyon condensation from the physics point of view.

Our interpretation of D-branes as $K$-cycles coincides with the modern way of characterizing fundamental strings. In modern versions of string theory, fundamental strings are usually characterized by maps from two-dimensional manifolds into spacetime (nonlinear sigma models), with the image of such a map being interpreted as a string worldsheet. In our case, D-branes are given by maps from manifolds of various dimensions into spacetime, with the image of such a map being interpreted as a D-brane worldvolume. This interpretation is especially called for in type IIB string theory, in which the D1-branes (D-strings) have been shown to be able to combine with fundamental strings to form bound states, hence suggesting a similarity between D-strings and fundamental strings. This is very natural in our $K$-homological interpretation, because D-strings are given by maps from two-dimensional manifolds into spacetime, similar to fundamental strings.

Our $K$-homological classification of D-branes suggests a possible understanding of D-branes wrapping subspaces of spacetime that are not necessarily smooth. Quantum field theories are not clearly to be defined on singular spaces, thus preventing us from finding a low energy effective description of such a D-brane configuration on a singular subspace. However, string theory is known to be a generalization of quantum field theories that has shown its ability to overcome several types of singularities of spacetime. Therefore, our $K$-homological classification of D-branes strongly suggests a stronger power of string theory in handling singularities. The detailed physical description is left to future work.

\end{document}